\documentclass[11pt]{amsart}
\usepackage[utf8x]{inputenc}
\usepackage[english]{babel}
\usepackage[T1]{fontenc}
 
\usepackage{graphicx}
\usepackage{caption}
\usepackage{subcaption}

\usepackage{amssymb,amsmath}
\usepackage[hidelinks]{hyperref}
\usepackage{indentfirst}
\usepackage{enumerate,amsmath,amssymb, mathrsfs,mathtools}
\usepackage{appendix}
\usepackage{latexsym}
\usepackage{url}
\usepackage{color}
\usepackage{accents}
\usepackage{setspace}
\usepackage{pdfpages}
\usepackage{stmaryrd}
\usepackage{amsrefs}

\usepackage[margin=2.5cm]{geometry}
\allowdisplaybreaks

\makeatletter
\def\PrintDOI#1{%
	\href{#1}{\expandafter\@stripdoi#1\@nil}%
}
\def\@stripdoi#1://#2/#3\@nil{#3}
\makeatother

\BibSpec{article}{%
	+{}{\PrintAuthors} {author}
	+{,}{ } {title}
	+{, }{\textit } {journal}
	+{}{ \parenthesize} {date}
		+{,  }{no. } {volume}
	+{,}{ } {pages}
	+{,}{ } {note}
	+{, }{\PrintDOI} {url}
}

\ExplSyntaxOn

\ExplSyntaxOff
\BibSpec{book}{%
	+{}{\PrintAuthors}  {author}
	+{. }{}{title}
	+{,}{ }{series}
	+{,}{ vol.~}{volume}
	+{. }{\textit}{publisher}
	+{,}{ ISBN}{isbn} 
}
\BibSpec{collection.article}{%
	+{}{\PrintAuthors}{author}
	+{, }{}{title}
	+{, }{\textit}{booktitle}
	+{, }{ \DashPages}{pages}
	+{,}{ }{series}
	+{, }{}{volume}
	+{, }{\textit}{publisher}
	+{,}{ }{date}
		+{, }{\PrintDOI} {url}
}

\mathcode`l="8000
\begingroup
\makeatletter
\lccode`\~=`\l
\DeclareMathSymbol{\lsb@l}{\mathalpha}{letters}{`l}
\lowercase{\gdef~{\ifnum\the\mathgroup=\m@ne \ell \else \lsb@l \fi}}%
\endgroup

\def\XXint#1#2#3{{\setbox0=\hbox{$#1{#2#3}{\int}$ }
		\vcenter{\hbox{$#2#3$ }}\kern-.6\wd0}}

\newtheorem{prop}{Proposition}
\newtheorem{thm}[prop]{Theorem}
\newtheorem{lem}[prop]{Lemma}
\newtheorem{coro}[prop]{Corollary}

\newtheorem{rema}[prop]{Remark}

\title{Weakly stable solutions of Serrin's  problem}
\author{Michael Eichmair}
\address{
	\textnormal{Michael Eichmair \newline  \indent
		University of Vienna \newline \indent
		Faculty of Mathematics  \newline \indent
		Oskar-Morgenstern-Platz 1 \newline \indent
		1090 Vienna, 	Austria  \newline\indent 
		\href{https://orcid.org/0000-0001-7993-9536}{https://orcid.org/0000-0001-7993-9536} \newline\indent	
		\href{mailto:michael.eichmair@univie.ac.at}{michael.eichmair@univie.ac.at}}
}

\author{Thomas Koerber}
\address{\textnormal{Thomas Koerber  \newline \indent
		University of Vienna \newline \indent
		Faculty of Mathematics  \newline \indent
		Oskar-Morgenstern-Platz 1 \newline \indent 1090 Vienna,	Austria \newline\indent 
		\href{https://orcid.org/0000-0003-1676-0824}{https://orcid.org/0000-0003-1676-0824} \newline \indent
		\href{mailto:thomas.koerber@univie.ac.at}{thomas.koerber@univie.ac.at}}
}

\begin{document}
	\maketitle
	\begin{abstract}
We prove that weakly stable solutions of Serrin's problem are compact and therefore  round balls.
	\end{abstract}
\section{Introduction}
Let $n\geq 2$ and $ \Omega \subset \mathbb{R}^n$  a domain, i.e., open, nonempty, and such that $\bar \Omega$ is a properly embedded top-dimensional submanifold.   We denote the mean curvature of $\partial \Omega$ with respect to the outward pointing normal $\mu(\Omega)$ by $H$. We call $\Omega$ a solution of Serrin's problem \eqref{Serrin} if there is $u \in C^\infty (\bar \Omega)$ such that
\begin{align} \label{Serrin} 
\begin{dcases}
\quad	-\Delta u=n\qquad& \text{in $\Omega$},\\
\quad	u>0&\text{in $\Omega$},\\ 
\quad	u=0 &\text{on $\partial \Omega$, and}\\
\quad	|Du|=1&\text{on $\partial \Omega$}.
\end{dcases}
\end{align}
 Note that $\mu(\Omega)=-Du$ on $\partial \Omega$ in this setting.

Using the method of moving planes, J.~Serrin \cite{Serrin} has shown that if $\Omega$ is compact and a solution of \eqref{Serrin}, then $\Omega$ is a  ball of radius 1. Subsequently, H.~Weinberger \cite{Weinberger} has found a short alternative proof of this fact. Such a result fails if $\Omega$ is allowed to be noncompact. Indeed, for all integers $m,\,n$ with $1\leq m\leq n-1$, the cylinder
\begin{align} \label{cylinder} 
\{y\in \mathbb{R}^m:|y|<m/n\}\times \mathbb{R}^{n-m}
\end{align} 
is a solution of \eqref{Serrin}. In view of this observation, H.~Berestycki, L.~Caffarelli, and L.~Nirenberg \cite[p.~1110]{BCN} have conjectured that if $\Omega$ is noncompact and a solution of \eqref{Serrin}, then $\Omega$ is either a  ball of radius $1$ or congruent to one of the  cylinders described in \eqref{cylinder}. Indeed,  they have made a similar conjecture for solutions $\Omega$ of the semilinear over-determined problem
\begin{align} \label{Serrin generalized} 
	\begin{dcases}
		\quad	-\Delta u=f(u)\qquad& \text{in $\Omega$},\\
		\quad	u>0&\text{in $\Omega$},\\ 
		\quad	u=0 &\text{on $\partial \Omega$, and}\\
		\quad	|Du|=1&\text{on $\partial \Omega$},
	\end{dcases}
\end{align} 
where $f\in C^\infty(\mathbb{R})$. We note that, by the work of J.~Serrin \cite{Serrin}, if $\Omega$ is compact and a solution of \eqref{Serrin generalized}, then $\Omega$ is a ball.  Counterexamples to this conjecture have been found in, e.g., \cite{Sicbaldi,DelPino} for certain choices of $f$ and, notably, by 
  M.~Fall, I.~Minlend, and T.~Weth \cite{Fall} for the choice $f=n$; see also \cite{WangWei} for a related positive result in this direction. In this paper, we show that, in the case where $f=n$,  the conjecture of H.~Berestycki, L.~Caffarelli, and L.~Nirenberg holds under a natural additional assumption on $\Omega$. 

To explain this, recall that, by the pioneering work of H.~Alt and L.~Caffarelli \cite{altcafarelli}, solutions of \eqref{Serrin generalized} are given by $\Omega=\{y\in \mathbb{R}^n:u(y)>0\}$ where $u\in C^\infty(\mathbb{R}^n)$ is a critical point  of the functional
$$
J(u)=\frac12\,\int_{\{y\in \mathbb{R}^n:u(y)>0\}} |Du|^2-\int_{\{y\in \mathbb{R}^n:u(y)>0\}}F(u).
$$
Here, $F\in C^\infty(\mathbb{R})$ is such that $F'=f$;
 see also, e.g., \cite{WangWei}. In the case where $f=n$, $J$ is not bounded from below. It is therefore natural to study $u\in C^\infty(\mathbb{R}^n)$ that are critical  for $J$ given the relative volume of $\{y\in \mathbb{R}^n:u(y)>0\}$. 
In this context,  we call $\Omega$ weakly stable if 
\begin{align}\label{stable}
\int_{\partial \Omega} (f(0)-H)\,\phi^2\leq \int_{\Omega} (|D\phi|^2-f'(u)\,\phi^2)
\end{align} 
for all $\phi\in C_c^\infty(\bar \Omega)$  such that 
$$
\int_{\partial \Omega} \phi =0.
$$
We call $\Omega$  stable if  \eqref{stable} holds
for all $\phi \in C_c^\infty(\bar \Omega)$. As we explain in Appendix \ref{appendix a}, $\Omega$ is weakly stable if and only if $u$ passes the second derivative test for $J$ among compact perturbations that preserve the   relative volume of $\{y\in \mathbb{R}^n:u(y)>0\}$. 

Notably, domains $\Omega$ that are solutions of \eqref{Serrin} and  are weakly stable arise in the context of minimal surface theory. To explain this, let $S\subset \mathbb{R}^{n+1}$ be a closed hypersurface with positive mean curvature bounding an open and bounded set $M\subset \mathbb{R}^{n+1}$. Let $\Sigma \subset \bar M$ be a relative boundary. In particular, $\partial \Sigma \subset S$ bounds a closed subset $S(\Sigma)\subset S$ called the wetting surface of $\Sigma$. $\Sigma$ is called a minimal capillary surface supported on $S$ if the mean curvature of $\Sigma$ vanishes and if $\Sigma$ and $S$ intersect along $\partial \Sigma$ at a constant angle  $\theta\in(0,\pi)$. Minimal capillary surfaces are critical for area given the area of their wetting surface. They are called weakly stable if they pass the second derivative test for area among perturbations that fix the area of their wetting surface. In the case where $n=2$, the  authors \cite{eichmair2026tangential} have shown weakly stable solutions $\Omega$  of \eqref{Serrin} arise as certain blowup limits of weakly stable minimal capillary surfaces supported on $S$ with $\theta$ close to either $0$ or $\pi$. This has led to a characterization of all weakly stable minimal capillary surfaces supported on $S$ with angle close to $0$ or $\pi$; see \cite[Theorem 6]{eichmair2026tangential}.  A fundamental ingredient in the proof is the following rigidity result for weakly stable solutions of \eqref{Serrin}.
\begin{thm}{\cite[Proposition 31]{eichmair2026tangential}} Let $n=2$. Assume that $\Omega$ is a weakly stable solution of \eqref{Serrin} and that $\partial \Omega$ has bounded curvature. Then $\Omega$ is a ball of radius $1$. \label{main theorem 0}
\end{thm}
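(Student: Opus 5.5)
The plan is to prove that $\Omega$ is compact and then invoke the theorem of J.~Serrin \cite{Serrin} recalled above. The compactness will come from testing the weak stability inequality \eqref{stable} with logarithmic cut-off functions. Here $f=n=2$, so $f'=0$ and $f(0)=2$, and \eqref{stable} reads
\[
\int_{\partial\Omega}(2-H)\,\phi^2\le\int_\Omega|D\phi|^2\qquad\text{whenever }\int_{\partial\Omega}\phi=0.
\]

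\textbf{Step 1: a uniform positive lower bound for $2-H$.} Consider the $P$-function $P=|Du|^2+u$, which satisfies $\Delta P=2|D^2u|^2-2\ge 0$ in $\Omega$ by the Cauchy--Schwarz inequality $|D^2u|^2\ge (\Delta u)^2/2=2$. On $\partial\Omega$ we have $P=1$. Since $u$ vanishes and $|Du|=1$ on $\partial\Omega$, we get $u_\nu=-1$ and $\Delta u=u_{\nu\nu}+Hu_\nu$ there, hence $u_{\nu\nu}=H-2$.

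I want the maximum principle to give $P\le 1$ in $\Omega$, and this is the step I expect to be the main obstacle, since $\Omega$ may be unbounded. First I will use the bounded curvature of $\partial\Omega$: together with the overdetermined boundary data and interior and boundary Schauder estimates on balls of fixed size, it should give uniform $C^2$ bounds for $u$ near $\partial\Omega$. Next I will show that $|Du|$ and $u$ are bounded. Then I will apply a Phragmén--Lindelöf or Omori--Yau type maximum principle to the subharmonic function $P$, and I would try first to run it along a sequence of near-supremum points with a blow-up/translation argument.

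Granting $P\le 1$, the Hopf lemma gives $\partial_\nu P\ge 0$ on $\partial\Omega$, that is,
\[
2u_\nu u_{\nu\nu}+u_\nu=-2(H-2)-1\ge 0 .
\]
Hence $2-H\ge 1/2$ on all of $\partial\Omega$.

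\textbf{Step 2: $\partial\Omega$ is unbounded if $\Omega$ is noncompact.} Suppose $\partial\Omega$ is bounded while $\Omega$ is unbounded. Then $\Omega$ contains $\mathbb{R}^2\setminus B_{r_0}$. The circular averages $\bar u(r)$ of $u$ satisfy $(r\bar u')'=-2r$ for $r>r_0$, so $\bar u(r)=-r^2/2+O(\log r)$. This contradicts $u>0$.

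\textbf{Step 3: contradiction via two disjointly supported test functions.} Since the constraint $\int_{\partial\Omega}\phi=0$ has codimension one, it suffices to find $\psi_1,\psi_2$ with disjoint supports, each violating the unconstrained inequality. Indeed, some combination $a\psi_1+b\psi_2\neq 0$ then satisfies the constraint, and it still violates the inequality because the cross terms vanish.

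For $R$ large, let $\psi_1$ be the logarithmic cut-off that equals $1$ on $B_R$, vanishes outside $B_{R^2}$, and is given by $2-\log|y|/\log R$ in between. Let $\psi_2$ be the analogous function that equals $1$ on $B_{R^6}\setminus B_{R^5}$ and vanishes outside $B_{R^8}\setminus B_{R^4}$. In dimension two,
\[
\int_{\mathbb{R}^2}|D\psi_i|^2\le \frac{C}{\log R}.
\]
On the other hand, by Step 2 we may fix a point of $\partial\Omega$ with $R$ chosen so that this point lies in $B_R$. Since $\partial\Omega$ is unbounded, each component of $\partial\Omega$ is either unbounded or closed, so $\partial\Omega$ has length at least some $c>0$ inside $B_R$ and inside $B_{R^6}\setminus B_{R^5}$ (the latter for suitably chosen large $R$). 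By Step 1,
\[
\int_{\partial\Omega}(2-H)\,\psi_i^2\ge \frac{c}{2},
\]
which exceeds $C/\log R$ for $R$ large. This contradicts weak stability. Hence $\Omega$ is compact, and by \cite{Serrin} it is a ball of radius $1$.
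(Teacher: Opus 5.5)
\textbf{The gap in Step~1.} Your strategy is sound and genuinely different from the paper's, but Step~1 has a real gap. The maximum principle for $P=|Du|^2+u$ on an unbounded $\Omega$ needs an a priori upper bound for $P$, i.e.\ $\sup_\Omega|Du|<\infty$. A bound for $u$ then follows: comparing $u$ with $v=\frac12\,(r(x)^2-|y-x|^2)$ on $B_{r(x)}(x)$ and using $|Du|=1$ at a touching point gives $r(x)\le 1$, hence $u(x)\le r(x)\,\sup_\Omega|Du|$.

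You only announce the gradient bound, and the route you indicate does not yield it. Schauder estimates bound $C^{2,\alpha}$ norms in terms of $\sup u$, but they do not bound $\sup u$. A curvature bound on $\partial\Omega$ also does not keep different sheets of $\partial\Omega$ apart.

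The missing ingredient is a barrier argument. On $B_{r(x)}(x)\setminus B_{r(x)/2}(x)$, the positive harmonic function $u-v$ lies above the radial harmonic function that equals $\kappa=\inf_{\partial B_{r(x)/2}(x)}(u-v)$ on the inner sphere and vanishes on the outer one. Comparing normal derivatives at a point of $\partial\Omega\cap\partial B_{r(x)}(x)$, where $|Du|=1$, gives $\kappa\le C\,r(x)$. The Harnack inequality and the gradient estimate for positive harmonic functions then give $|Du(x)|\le C$; this is Lemma~\ref{prelim gradient bound}.

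Once $P$ is bounded, you need neither Omori--Yau nor blow-ups. Since $\Delta P\ge 2$, the function $P-\varepsilon\,|y|^2$ is strictly subharmonic for $0<\varepsilon<1/2$ and tends to $-\infty$ at infinity. So it attains its maximum on $\partial\Omega$, and letting $\varepsilon\to 0$ gives $P\le 1$.

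\textbf{A smaller omission at the end.} A compact $\Omega$ could a priori be a disjoint union of unit balls, since Serrin's theorem applies componentwise. Your own disjoint-support device excludes this: $\psi_i=\chi_{\Omega_i}$ satisfies $\int_\Omega|D\psi_i|^2=0<2\pi=\int_{\partial\Omega}(2-H)\,\psi_i^2$.

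\textbf{Comparison with the paper.} With these repairs your proof is complete. The curvature assumption enters in Step~3 through the length bound $2\pi/\sup|H|$ for closed components of $\partial\Omega$. Your proof differs from both existing ones. The proof in \cite{eichmair2026tangential} uses Gauss--Bonnet. The paper covers all $n$ without a curvature assumption, but it only has the pointwise bound $H\le n$ of Lemma~\ref{mean curvature trivial bound}. It therefore needs:
\begin{enumerate}
\item the volume asymptotics $A(\lambda_k)\approx n\,V(\lambda_k)$ of Proposition~\ref{asymptotics};
\item a balancing argument turning weak stability into stability (Lemma~\ref{weakly stable, stable});
\item a second test function $\eta_k\,\varphi$ to produce the quantity $n-1-H$;
\item Weinberger's integral identities for finite-volume, possibly unbounded domains.
\end{enumerate}
You bypass all of this with three ingredients: the pointwise bound $2-H\ge 1/2$, the parabolicity of $\mathbb{R}^2$, and the observation that two disjointly supported destabilizing functions already contradict weak stability. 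Together they exclude every unbounded $\Omega$ at once. The price is that the logarithmic cut-off is specific to $n=2$ and that you use the curvature bound.

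Your P-function argument also works verbatim in every dimension, since $\Delta P=2|D^2u|^2-n\ge n$, and gives $n-H\ge 1/2$. Inserted into the paper's proof, the first stability test with $\eta_k$ alone would already give $\frac12\,A(\lambda_k)\le o(1)\,A(\lambda_k)$.
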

In this paper, we show that Theorem \ref{main theorem 0} holds in all dimensions, even without the assumption that the curvature of $\partial \Omega$ is bounded. In the case where $f=n$, this gives an affirmative answer to the conjecture of H.~Berestycki, L.~Caffarelli, and L.~Nirenberg under the, from a variational point of view, most natural additional assumption.
\begin{thm} \label{main theorem} 
Let $n\geq 2$.	Assume that $\Omega$ is a weakly stable solution of \eqref{Serrin}. Then $\Omega$ is a ball of radius $1$.
\end{thm}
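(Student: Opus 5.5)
The plan is to show that $\Omega$ is compact; Serrin's theorem \cite{Serrin} then gives that $\Omega$ is a ball of radius $1$. Here $f=n$, so $f(0)=n$ and $f'=0$, and weak stability \eqref{stable} reads
$$
\int_{\partial\Omega}(n-H)\,\phi^2\le\int_\Omega|D\phi|^2\qquad\text{for all }\phi\in C^\infty_c(\bar\Omega)\text{ with }\int_{\partial\Omega}\phi=0 .
$$
On $\partial\Omega$ we have $u=0$ and $|Du|=1$, so $\Delta u=u_{\mu\mu}+H\,u_\mu$ together with $u_\mu=-1$ gives $u_{\mu\mu}=H-n$. The sign of $n-H$ is therefore tied to the normal second derivative of $u$.

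\textbf{Step 1: a pointwise sign for $n-H$.} The quantity $P=|Du|^2+\tfrac{2}{n}u$ satisfies $\Delta P\ge 0$ by the Cauchy--Schwarz inequality $|D^2u|^2\ge(\Delta u)^2/n$, and $P=1$ on $\partial\Omega$. From the normal derivative $\partial_\mu P=2u_\mu u_{\mu\mu}+\tfrac2n u_\mu=-2(H-n)-\tfrac2n$ we see that $H$ is controlled exactly when $P\le1$ in $\Omega$. To get $P\le 1$ I will use a maximum principle on unbounded domains. For this I first need $|Du|$ bounded, which I expect to follow from gradient estimates for the Alt--Caffarelli free boundary problem together with $u\le C$. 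Granting $P\le1$, the Hopf lemma gives $\partial_\mu P\ge0$, that is $H\le n-\tfrac1n<n$. So the boundary potential $n-H$ is uniformly positive, which is the useful sign for stability.

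\textbf{Step 2: test functions.} Assume $\Omega$ is unbounded. Fix a point $p\in\partial\Omega$ and a large radius $R$. I will take $\phi=\eta_1-c\,\eta_2$, where $\eta_1,\eta_2$ are logarithmic (or linear) cutoffs supported in two disjoint far-apart annular regions met by $\partial\Omega$, and $c$ is chosen so that $\int_{\partial\Omega}\phi=0$. Step 1 bounds the left-hand side of the stability inequality below by $\tfrac1n\bigl(|\partial\Omega\cap \operatorname{supp}\eta_1|+c^2|\partial\Omega\cap\operatorname{supp}\eta_2|\bigr)$. The right-hand side is a Dirichlet energy of order $R^{-2}|\Omega\cap B_{2R}|$. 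So I need a volume-to-area comparison $|\Omega\cap B_{2R}\setminus B_R|\lesssim R\,|\partial\Omega\cap B_{3R}\setminus B_{R/2}|$. I expect this to come from the divergence identity $\int_{\Omega\cap B_r}n=\int_{\partial\Omega\cap B_r}1-\int_{\Omega\cap\partial B_r}\partial_\nu u$, combined with $|Du|\le1$. It should also be checked that $\partial\Omega$ is unbounded, so that the annuli meet the boundary. The resulting inequality $|\partial\Omega\cap A_R|\le CR^{-1}|\partial\Omega\cap A_R'|$ can then be iterated over dyadic scales, and it contradicts the lower area growth $|\partial\Omega\cap B_R|\gtrsim R$, which holds for an unbounded connected boundary component.

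\textbf{Main obstacle.} The hardest part is Step 1 without any curvature bound, namely proving $\sup P\le1$ on an unbounded domain with no a priori control at infinity. My first attempt will be a Liouville/Phragm\'en--Lindel\"of argument. Gradient bounds follow from the Lipschitz regularity of $u$ (Alt--Caffarelli). For the bound $u\le C$, I would use the fact that a point where $u$ is large has a large ball inside $\Omega$, and on such a ball $-\Delta u=n$ combined with stability should be contradictory; a ball of radius $\sqrt{2\sup u}$ argument gives this. If this fails, I would instead run the test-function argument of Step 2 directly with the weight $|Du|^2$, using $\int_{\partial\Omega}H\phi^2$ rewritten via the Reilly formula for $u$, which avoids needing the pointwise sign.
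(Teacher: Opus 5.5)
Your Step 1 is sound, and it gives more than the paper uses (Lemma \ref{mean curvature trivial bound} only yields $H\le n$). Since $\Delta P=2|D^2u|^2-2\ge 2n-2>0$, once $u$ and $|Du|$ are bounded the $\varepsilon|y|^2$ argument from the proof of Corollary \ref{gradient} gives $P\le 1$, hence $H\le n-1/n$. Boundedness is not the real obstacle, though. It needs neither Alt--Caffarelli regularity (proved for minimizers) nor stability. Comparing $u$ with $\frac12 r(x)^2-\frac12|y-x|^2$ gives $r(x)\le 1$. A Harnack plus fundamental-solution barrier gives $\sup_\Omega|Du|<\infty$, and then $|Du|<1$ and $0<u<1$; see Lemmas \ref{distance} and \ref{prelim gradient bound} and Corollaries \ref{gradient} and \ref{function}.

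The genuine gap is in Step 2. Applied to pairs of disjointly supported cutoffs, weak stability gives the stability inequality for all but at most one of your annular cutoffs; that part is fine. But the identity $n\int_\Omega\eta=\int_{\partial\Omega}\eta+\int_\Omega D\eta\cdot Du$ compares volume and area in an annulus only up to an error of order $R^{-1}$ times the volume of neighboring annuli. It does not yield $|\Omega\cap A_R|\lesssim R\,|\partial\Omega\cap A_R'|$. Also, since $A_R'\supset A_R$, the inequality $|\partial\Omega\cap A_R|\le CR^{-1}|\partial\Omega\cap A_R'|$ cannot simply be iterated. More importantly, all these inequalities are consistent with volume and area in the $k$-th dyadic annulus decaying super-exponentially, say like $2^{-k^3}$. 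That is exactly an unbounded $\Omega$ of finite volume whose ends thin out. You exclude this only through the lower bound $|\partial\Omega\cap B_R|\gtrsim R$, which is unjustified. First, $\partial\Omega$ need not have an unbounded component. Second, for $n\ge 3$ an unbounded connected hypersurface can have arbitrarily little area in annuli (thin tubes), unless you prove a uniform lower density estimate for $\partial\Omega$, i.e. a free-boundary nondegeneracy result you neither state nor prove.

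The paper never needs such an estimate, because it splits according to volume. If $|\Omega|=\infty$, Lemma \ref{lambda prep} and Proposition \ref{asymptotics} select radii with $A(\lambda_k)=(1+o(1))\,n\,V(\lambda_k)$ and $V(\lambda_k+1)-V(\lambda_k)=o(A(\lambda_k))$. Lemma \ref{weakly stable, stable} then upgrades weak stability to stability, and the test functions $\eta_k$ and $\eta_k\,\varphi$ contradict $H\le n$. With your bound $H\le n-1/n$, the test function $\eta_k$ alone would already suffice, so your Step 1 would shorten this case. If $|\Omega|<\infty$, stability plays no role: $u\to 0$ at infinity gives $\varphi\le 1/2$ by the maximum principle. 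Weinberger's identities $(n+2)\int_\Omega u=|\Omega|$ and $\int_\Omega|Du|^2=n\int_\Omega u$ then force $\varphi\equiv 1/2$, so every component is a unit ball. Finally, even granting compactness, Serrin's theorem only shows that each component of $\Omega$ is a unit ball. You still need weak stability, with the test function $\chi_{\Omega_1}-\chi_{\Omega_2}$, to conclude that $\Omega$ is connected.
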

It is remarkable that Theorem \ref{main theorem} holds in all dimensions. Indeed, J.~Serrin's overdetermined problem \eqref{Serrin} is closely related to the one-phase Bernoulli problem; see \cite{altcafarelli} and also \cite{ChodoshEdelenLi} for a connection between the one-phase Bernoulli problem and minimal capillary surfaces. The one-phase Bernoulli problem is \eqref{Serrin generalized} with $f=0$, i.e., 
\begin{align} \label{Bernoulli} 
	\begin{dcases}
	\quad	-\Delta u=0\qquad& \text{in $\Omega$},\\
	\quad	u>0&\text{in $\Omega$},\\ 
	\quad	u=0 &\text{on $\partial \Omega$, and}\\
	\quad	|Du|=1&\text{on $\partial \Omega$}.
	\end{dcases}
\end{align} 
In the case where $n\leq 3$, stable solutions $\Omega$ of \eqref{Bernoulli} are known to be half-spaces; see \cite{ChanFernandezRealFigalliSerra2026} and, e.g., \cite{KamburovWang}. By contrast, in the case where $n\geq 7$, there are infinitely many geometrically distinct stable solutions of \eqref{Bernoulli}; see \cite{Silva}. This shows that the assumption that $f=n$ in Theorem \ref{main theorem} cannot be dropped altogether. We note that, in a similar context, M.~do Carmo \cite{doCarmo} has conjectured that  constant mean curvature hypersurfaces in $\mathbb{R}^{n+1}$ that pass the second derivative test for area among all hypersurfaces enclosing the same relative volume are round spheres for all $n\geq 2$. By contrast, stable minimal hypersurfaces in $\mathbb{R}^{n+1}$ are known to be hyperplanes in the case where $n\leq 5$ while there are infinitely many geometrically distinct stable minimal hypersurfaces in $\mathbb{R}^{n+1}$ in the case where $n\geq 7$; see, e.g., \cite{doCarmoPeng,Pogorelov,Fischer-ColbrieSchoen,Bernstein4,Bernstein5,Bernstein6} and \cite{Simons}. 
\subsection*{Outline of our arguments} Our approach toward Theorem \ref{main theorem} differs substantially from our proof of Theorem \ref{main theorem 0} in \cite{eichmair2026tangential}, which is based on the Gauss-Bonnet theorem and therefore specific to the case where $n=2$. It also differs substantially from the arguments employed in the recent work of H.~Chan, X.~Fern\'andez-Real, A.~Figalli, and J.~Serra \cite{ChanFernandezRealFigalliSerra2026} on \eqref{Bernoulli}, which are specific to the case where $n=3$ and, in view of the counterexamples found in \cite{Silva}, cannot possibly be adapted to  the case where $n\geq 7$.

First, we suppose, for a contradiction, that $\Omega$ has infinite volume. Given $\lambda>0$, let $A(\lambda)=|B_\lambda(0)\cap \partial \Omega|$ and $V(\lambda)=|B_\lambda(0)\cap  \Omega|$. Using  \eqref{Serrin}, we show that \begin{align} \label{A vs V} A(\lambda)\approx n\,V(\lambda)\end{align}  for suitable numbers $\lambda\gg1$; see Proposition \ref{asymptotics}. In conjunction with a balancing argument, \eqref{A vs V} implies that  $\Omega$ is  stable; see Lemma \ref{weakly stable, stable}. The proof relies on certain gradient estimates for $u$ that we establish in Section \ref{gradient estimates}. We then choose a sequence $\{\eta_k\}_{k=1}^\infty$ of suitable cutoff functions $\eta_k\in C^\infty_c(\mathbb{R}^n)$ with $\eta_k\to 1$ locally smoothly. Applying the stability of $\Omega$ with $\phi=\eta_k$ and using \eqref{A vs V}, we obtain that
$$
\int_{\partial \Omega} \eta_k^2\,(n-1-H)\leq o(1)\,A(\lambda_k)-A(\lambda_k).
$$
Applying the stability of $\Omega$ with $\phi=\eta_k\,\varphi$ where $\varphi=u+|Du|^2/2$ and using \eqref{A vs V} again, we conclude that 
$$
\int_{\partial \Omega} \eta_k^2\,(n-H)\leq -(2+o(1))\,A(\lambda_k);
$$
see \eqref{contradict inequality}. A crucial ingredient here is that $\varphi$ is subharmonic in $\Omega$ and satisfies $\mu(\Omega)\cdot D\varphi=n-1-H$ on $\partial \Omega$; see Lemma \ref{phi}. By contrast, the gradient estimates in Section \ref{gradient estimates} show that $n\geq H$; see Lemma \ref{mean curvature trivial bound}. This leads to a contradiction.

It follows that $\Omega$ has finite volume. In view of the gradient estimates  in Section \ref{gradient estimates}, we show how to adapt the argument of H.~Weinberger \cite{Weinberger} to this setting; see Proposition \ref{infinite volume}. In particular, every component of $\Omega$ is a ball of radius 1. Using the weak stability of $\Omega$, we see that $\Omega$ is connected.

\subsection*{On the use of artificial intelligence (A.I.)} None of the arguments employed here have been suggested by or discussed with A.I. tools. The paper
does not contain text written by A.I. tools. The authors have used Claude Opus 5 to check the typesetting of this paper.
\subsection*{Acknowledgments}  This research was  funded in whole or in part by the
Austrian Science Fund (FWF) [\href{https://www.fwf.ac.at/en/research-radar/10.55776/PAT1307525}{10.55776/PAT1307525}, \href{https://www.fwf.ac.at/en/research-radar/10.55776/PAT2423724}{10.55776/PAT2423724}].

\section{Gradient estimates} 
\label{gradient estimates}
Throughout, we assume that $n\geq 2$. We use $O(1)$ to denote  constants that only depends on $n$.

The arguments in this section are well-known to experts in the field.
\begin{lem} \label{Liouville}
	There holds $\Omega\neq \mathbb{R}^n$. 
\end{lem}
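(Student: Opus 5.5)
We argue by contradiction and suppose that $\Omega=\mathbb{R}^n$. Then $\partial\Omega=\emptyset$, and $u\in C^\infty(\mathbb{R}^n)$ satisfies $-\Delta u=n$ and $u>0$ on all of $\mathbb{R}^n$. The boundary conditions in \eqref{Serrin} are vacuous, so the contradiction has to come from positivity alone. The plan is to remove the source term with an explicit quadratic and then apply Liouville's theorem to a function that is bounded from below.

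First, set
$$
w(y)=u(y)+\frac{|y|^2}{2}.
$$
Since $\Delta |y|^2/2=n$, we have $\Delta w=0$ on $\mathbb{R}^n$, and $w>|y|^2/2\ge 0$. So $w$ is a nonnegative harmonic function on all of $\mathbb{R}^n$. By Liouville's theorem for harmonic functions bounded from one side (for instance via the Harnack inequality on balls $B_R(0)$ with $R\to\infty$), $w$ is constant, say $w\equiv c$.

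Then $u(y)=c-|y|^2/2$, which is negative once $|y|^2>2c$. This contradicts $u>0$ in $\Omega=\mathbb{R}^n$, so $\Omega\neq\mathbb{R}^n$.

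The only step needing care is the one-sided Liouville theorem. Since $w$ is bounded below on all of $\mathbb{R}^n$, this is standard. Weak stability is not needed here.
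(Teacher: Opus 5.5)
Your proof is correct and is essentially the paper's argument: the paper also sets $v=u+\frac12\,|x|^2$, observes that $v$ is positive and harmonic on $\mathbb{R}^n$, concludes from Liouville's theorem that $v$ is constant, and obtains a contradiction with $u>0$. Your observation that $u=c-|y|^2/2$ becomes negative once $|y|^2>2c$ just spells out the final step, which the paper leaves implicit.
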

\begin{proof}
	Suppose, for a contradiction, that $\Omega=\mathbb{R}^n$. Then $v\in C^\infty(\mathbb{R}^n)$ given by  
	$$
v(x)= u(x)+\frac12\,|x|^2
	$$
	is  positive and harmonic. By the Liouville theorem, $v$ is a positive constant. Since  $u>0$, this leads to a contradiction. 
\end{proof}
Given $x\in \mathbb{R}^n$ and $r>0$, let $$B_r(x)=\{y\in \mathbb{R}^n:|y-x|<r\}.$$ Given $x\in \Omega$, let  $$r(x)=\operatorname{dist}(x,\partial \Omega).$$ By Lemma \ref{Liouville}, $r(x)<\infty$.  Note that  $B_{r(x)}(x)\subset \Omega$ and $\partial \Omega\cap \partial B_{r(x)}(x)\neq \emptyset$. Moreover, 
$$
\mu(\Omega)(z)=\frac{z-x}{|z-x|}
$$
for all $z\in \partial \Omega\cap \partial B_{r(x)}(x)$.

\begin{lem}[{cp.~\cite[Lemma 25]{eichmair2026tangential}}] \label{distance}
	Let $x\in \Omega$. There holds 
	$$
r(x)\leq 1.
	$$
\end{lem}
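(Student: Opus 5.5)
We argue by comparison on the largest interior ball. Fix $x\in\Omega$, set $r=r(x)$, and pick $z\in\partial\Omega\cap\partial B_r(x)$. On $B_r(x)\subset\Omega$ consider the radial solution
$$
w(y)=\frac{r^2-|y-x|^2}{2},
$$
which satisfies $-\Delta w=n$ in $B_r(x)$ and $w=0$ on $\partial B_r(x)$. Since $u>0$ in $\Omega$ and $u$ is continuous on $\bar\Omega$, we have $u\geq 0=w$ on $\partial B_r(x)$, while $u-w$ is harmonic in $B_r(x)$. The maximum principle then gives $u-w\geq 0$ in $B_r(x)$.

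We compare at the contact point $z$, where $u(z)=w(z)=0$. Because $z\in\partial B_r(x)$ is a minimum point of the harmonic function $u-w\ge 0$ on $\bar B_r(x)$, the Hopf lemma gives either $u\equiv w$ on $B_r(x)$, or
$$
\partial_\nu (u-w)(z)<0, \qquad \nu=\frac{z-x}{r}.
$$
In both cases $\partial_\nu u(z)\le \partial_\nu w(z)=-r$. The paper notes that $\mu(\Omega)(z)=\nu$ and $Du=-\mu(\Omega)$ on $\partial\Omega$, so $\partial_\nu u(z)=-|Du(z)|=-1$. Therefore $-1\le -r$, that is, $r(x)\le 1$.

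No real obstacle is expected. The one point needing care is justifying the Hopf lemma at $z$, or simply the one-sided derivative inequality. Since $u-w\ge 0$ on $\bar B_r(x)$ and vanishes at $z$, the difference quotient along $-\nu$ at $z$ is nonnegative. This gives $\partial_\nu(u-w)(z)\le 0$ directly, using only that $u$ is smooth up to the boundary, and that weak inequality is all the argument needs.
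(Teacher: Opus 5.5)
Your proof is correct and is essentially the paper's own argument. The paper also compares $u$ with $\frac12\,r(x)^2-\frac12\,|y-x|^2$ on $B_{r(x)}(x)$ using the maximum principle, then reads off $0\geq -1+r(x)$ from the weak normal-derivative inequality at a contact point $z\in\partial\Omega\cap\partial B_{r(x)}(x)$; as you note, the Hopf lemma is not needed.
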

\begin{proof}
	Let  $v\in C^\infty(\mathbb{R}^n)$ be given by 
	$$
	v(y)=\frac12\,r(x)^2-\frac12\,|y-x|^2.
	$$ 
	Note that $u-v$ is harmonic in $B_{r(x)}(x)$ and nonnegative on $\partial B_{r(x)}(x)$. 
	By the maximum principle, $u\geq v$ in $B_{r(x)}(x)$. Also note that  $u(z)=v(z)$ for all $z\in \partial \Omega\cap \partial B_{r(x)}(x)$. At such $z$,
	$$
	0\geq \mu(\Omega)\cdot Du-\mu(\Omega)\cdot Dv= -1+r(x).
	$$ 
This completes the proof of the lemma.
\end{proof}
\begin{lem} \label{subharmonic}
There holds
$$
\frac12\,\Delta |Du|^2\geq n
$$
in $\Omega$. 

\end{lem}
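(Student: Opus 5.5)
We apply the Bochner formula to $u$ and then use the Cauchy--Schwarz inequality on the Hessian. Since $u\in C^\infty(\bar\Omega)$, in $\Omega$ we have
$$
\frac12\,\Delta |Du|^2=|D^2u|^2+Du\cdot D\Delta u+\operatorname{Ric}(Du,Du).
$$
In $\mathbb{R}^n$ the Ricci term vanishes. By \eqref{Serrin}, $\Delta u=-n$ is constant, so $D\Delta u=0$. Hence
$$
\frac12\,\Delta|Du|^2=|D^2u|^2.
$$

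It remains to bound $|D^2u|^2$ from below. The Hessian $D^2u$ is a symmetric $n\times n$ matrix with trace $\Delta u=-n$. Applying the Cauchy--Schwarz inequality to its eigenvalues, or equivalently to the trace pairing with the identity matrix, gives
$$
|D^2u|^2\geq \frac1n\,(\Delta u)^2=\frac{n^2}{n}=n.
$$
Combining this with the identity above yields $\frac12\,\Delta |Du|^2\geq n$ in $\Omega$.

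There is no genuine obstacle in this argument. It only requires the Bochner identity in flat space, which can be verified directly by differentiating $\sum_i (\partial_i u)^2$ twice and commuting partial derivatives. We also note that equality holds exactly where $D^2u=-\mathrm{Id}$, which is consistent with the model case of the unit ball, where $u=\frac12(1-|y|^2)$.
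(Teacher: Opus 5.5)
Your proof is correct and follows the paper's argument: the Bochner formula with $\Delta u=-n$ gives $\frac12\,\Delta|Du|^2=|D^2u|^2$, and the trace inequality $|D^2u|^2\geq (\Delta u)^2/n=n$ finishes it (the paper phrases this inequality as arithmetic--geometric means; your Cauchy--Schwarz formulation is the same step). Your equality case $D^2u=-\operatorname{Id}$ is exactly what the paper uses next in Corollary~\ref{ball characterization}.
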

\begin{proof}
By  the Bochner formula, 
	\begin{align} \label{Laplacian} 
\frac12\,	\Delta |Du|^2=Du\cdot D\Delta u+|D^2u|^2=|D^2u|^2.
	\end{align} 
By the inequality of arithmetic and geometric means,
	\begin{align} \label{estimate}  
	|D^2u|^2\geq \frac{(\Delta u)^2}{n}=n.
	\end{align} 
	This completes the proof of the lemma.

\end{proof}
\begin{coro}
	\label{ball characterization} 
	Assume that $\Omega$ is connected and that $$\frac12\,\Delta |Du|^2=n.$$  There is $x\in \mathbb{R}^n$ such that $\Omega=B_1(x)$. 
\end{coro}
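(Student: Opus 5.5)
Equality in the Bochner identity \eqref{Laplacian} forces equality in the arithmetic--geometric mean estimate \eqref{estimate}. The plan is to use this to pin down $u$ explicitly and then read off $\Omega$.

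First, from $\frac12\,\Delta|Du|^2=n$ and \eqref{Laplacian} we get $|D^2u|^2=n=(\Delta u)^2/n$ throughout $\Omega$. Equality in the Cauchy--Schwarz inequality $|D^2u|^2\ge (\operatorname{tr} D^2u)^2/n$ holds exactly when $D^2u$ is a multiple of the identity. Since $\Delta u=-n$, this gives $D^2u=-\mathrm{Id}$ pointwise in $\Omega$.

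Next, the map $y\mapsto Du(y)+y$ has zero derivative in $\Omega$. Because $\Omega$ is connected, it is constant, say $Du(y)=x-y$ for some $x\in\mathbb{R}^n$. Integrating once more gives
$$
u(y)=c-\tfrac12\,|y-x|^2
$$
on $\Omega$ for a constant $c$. By continuity, the same formula holds on $\bar\Omega$.

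It remains to identify $\Omega$. On $\partial\Omega$ we have $|Du|=1$, so $|y-x|=1$ there, and $u=0$ then forces $c=\tfrac12$. Thus $\partial\Omega\subset\partial B_1(x)$. Since $u>0$ in $\Omega$, we get $\Omega\subset\{y:|y-x|<1\}=B_1(x)$. Also $\partial\Omega\neq\emptyset$ by Lemma \ref{Liouville}.

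To conclude $\Omega=B_1(x)$, we argue by connectedness. The set $\Omega$ is open in $B_1(x)$. It is also relatively closed there: any limit point of $\Omega$ inside $B_1(x)$ lies in $\bar\Omega$, and it cannot lie in $\partial\Omega\subset\partial B_1(x)$. Since $B_1(x)$ is connected and $\Omega$ is nonempty, $\Omega=B_1(x)$.

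The main point needing care is this last topological step, that is, ensuring $\Omega$ is all of $B_1(x)$ rather than a proper subset. The clopen argument handles it.
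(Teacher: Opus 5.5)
Your proof is correct and follows the paper's argument: equality in \eqref{estimate} forces $D^2u=-\operatorname{Id}$, so $u(y)+\frac12\,|y-x|^2$ is constant on the connected set $\Omega$, and the boundary conditions then identify $\Omega$ as $B_1(x)$. You also spell out two details the paper leaves implicit: that $\partial\Omega\neq\emptyset$ by Lemma \ref{Liouville}, which is needed to fix the constant, and the clopen argument showing that $\Omega$ is all of $B_1(x)$.
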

\begin{proof}
	By assumption, equality holds in \eqref{estimate}. It follows that  $D^2u=-\operatorname{Id}$. Equivalently, there is  $x\in \mathbb{R}^n$ such that
	$$
	u(y)+\frac12\,|y-x|^2
	$$
	is constant in $\Omega$. Since $u=0$ and $|Du|=1$ on $\partial \Omega$, $\Omega=B_1(x),$ as asserted.
\end{proof}
\begin{lem}[{cp. \cite[Lemma 2.3]{wang2015structure}}] \label{prelim gradient bound}
	There holds
	$$
	\sup\{ |Du(x)|:x\in \Omega\}<\infty.
	$$
\end{lem}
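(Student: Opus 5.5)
Fix $x\in\Omega$ and set $r=r(x)\le 1$ by Lemma \ref{distance}. The plan is to bound $|Du(x)|$ by a dimensional constant. We use interior estimates on the ball $B_r(x)\subset\Omega$ together with a bound on $u$ near $\partial\Omega$. Since $u-v$ with $v(y)=-\tfrac12|y-x|^2$ is harmonic, standard gradient estimates for harmonic functions give
$$
|Du(x)|\le \frac{n}{r}\,\sup_{B_r(x)}|u-u(x)|+O(1)\,r .
$$
The whole problem therefore reduces to showing that $u$ grows at most linearly in the distance to the boundary, i.e.
$$
u(y)\le O(1)\,r(y)\quad \text{on }\Omega .
$$
Indeed, for $y\in B_{r}(x)$ we have $r(y)\le 2r$, so the oscillation over $B_r(x)$ would be $O(1)\,r$, and hence $|Du(x)|\le O(1)$.

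To prove the linear growth, I would argue by contradiction via a blow-up/compactness scheme, following \cite[Lemma 2.3]{wang2015structure}. Suppose there are points $y_k$ with $u(y_k)/r(y_k)\to\infty$. Let $z_k\in\partial\Omega$ be a nearest point to $y_k$. Consider the rescaled functions
$$
w_k(z)=\frac{u(z_k+r(y_k)z)}{u(y_k)} .
$$
Each $w_k$ is nonnegative and satisfies $\Delta w_k=-n\,r(y_k)^2/u(y_k)\to 0$. It vanishes on the rescaled boundary and equals $1$ at a point at distance $1$ from the origin. The boundary gradient becomes $|Dw_k|=r(y_k)/u(y_k)\to0$.

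Since $\Delta u<0$, $u$ is superharmonic. The Harnack inequality applied to $u+\tfrac12|\cdot-x|^2$ on interior balls, chained along paths, keeps $w_k$ locally bounded and positive near the rescaled point. In the limit we obtain a nonnegative harmonic function that vanishes on the limit boundary with zero normal derivative there, yet equals $1$ at an interior point. This contradicts the Hopf lemma, or unique continuation.

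A cleaner alternative that avoids blow-up is a direct barrier argument. At a boundary point $z$, the conditions $u=0$ and $|Du|=1$, together with the interior ball of radius $r\le1$, control $u$ in a neighbourhood. One would then propagate the bound using the subharmonicity of $|Du|^2$ from Lemma \ref{subharmonic}. However, subharmonicity on an unbounded domain only gives a maximum principle if the supremum is already known to be finite, so this route is circular unless combined with a growth bound. I would therefore use the blow-up argument.

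\textbf{Main obstacle.} The hard step is making the compactness rigorous without any a priori curvature bound on $\partial\Omega$. There is no uniform $C^{1,\alpha}$ control of the rescaled boundaries, so the limit boundary could be singular. The Hopf contradiction has to be phrased weakly. I would try the approach of Alt--Caffarelli: $|Du|\le1$ on $\partial\Omega$ holds in the viscosity sense. Combined with nondegeneracy, this yields the Lipschitz bound $u(y)\le C\,r(y)$ directly through a mean-value comparison on $B_{2r(y)}(y)$. Concretely, I would compare $u$ on that ball with the harmonic replacement and use the touching point $z$ to derive the bound.
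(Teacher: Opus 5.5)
Your reduction is fine: a bound $u(y)\le O(1)\,r(y)$ on $\Omega$, together with the interior gradient estimate for the harmonic function $u+\frac12|\cdot-x|^2$ on $B_{r(x)}(x)$, does give $|Du(x)|=O(1)$. The gap is that you never establish this linear growth. The blow-up argument fails where you say it does. Without uniform control of the rescaled boundaries, the condition $|Dw_k|=r(y_k)/u(y_k)\to 0$ on the boundary does not pass to any limit. Harnack chains also only control $w_k$ away from the boundary, so there is no limit object to which the Hopf lemma applies. The fallback in your last paragraph does not close the gap either. The ball $B_{2r(y)}(y)$ is not contained in $\Omega$. Nondegeneracy gives lower rather than upper bounds for $u$. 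The Alt--Caffarelli Lipschitz estimate is proved for minimizers, which is not the situation here.

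The missing idea is that no compactness is needed: the Hopf lemma can be applied quantitatively at scale $r(x)$ on the interior ball $B_{r(x)}(x)\subset\Omega$. This ball touches $\partial\Omega$ at some $z$, where everything is classical because $u\in C^\infty(\bar\Omega)$. The geometry of $\partial\Omega$ away from $z$ never enters. Set $v(y)=\frac12 r(x)^2-\frac12|y-x|^2$. Then $u-v$ is harmonic and nonnegative in $B_{r(x)}(x)$ and vanishes at $z$. Let $\kappa$ be the minimum of $u-v$ on $\partial B_{r(x)/2}(x)$; by Harnack, $(u-v)(x)\le O(1)\,\kappa$. On the annulus $B_{r(x)}(x)\setminus \bar B_{r(x)/2}(x)$, compare $u-v$ with the radial harmonic function $h$ equal to $\kappa$ on the inner sphere and $0$ on the outer one. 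Then $u-v\ge h$ with equality at $z$, so
\[
-1+r(x)=\mu(\Omega)\cdot D(u-v)(z)\le \mu(\Omega)\cdot Dh(z)=-c_n\,\frac{\kappa}{r(x)}
\]
for some $c_n>0$ depending only on $n$. Hence $\kappa=O(1)\,r(x)$ and $(u-v)(x)=O(1)\,r(x)$. This is exactly your linear growth, since $u(x)=(u-v)(x)+\frac12 r(x)^2$ and $r(x)\le 1$. The gradient estimate for positive harmonic functions then gives $|Du(x)|=|D(u-v)(x)|\le O(1)\,(u-v)(x)/r(x)=O(1)$. This is the paper's proof. Your blow-up is a non-quantitative attempt at the same Hopf comparison. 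The direct barrier you set aside should have been this one, not a maximum principle for $|Du|^2$.
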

\begin{proof}

		Fix $x\in \Omega$. Let   $v:\mathbb{R}^n\to \mathbb{R}$ be given by 
	$$
	v(y)=\frac12\,r(x)^2-\frac12\,|y-x|^2.
	$$ 
	Note that $u-v$ is harmonic in $B_{r(x)}(x)$ and that $u-v\geq 0$ on $\partial B_{r(x)}(x)$ with equality for all $z\in \partial \Omega\cap \partial B_{r(x)}(x)$. 
	By the strong maximum principle, either $u(y)=v(y)$ for all $y\in B_{r(x)}(x)$ or $u(y)>v(y)$ for all $y\in B_{r(x)}(x)$. Since $Dv(x)=0$, we may assume that $u(y)>v(y)$ for all $y\in B_{r(x)}(x)$.
	Let  $$\kappa=\inf\left\{u(y)-v(y):y\in \partial B_{r(x)/2}(x)\right\}.$$
	By the Harnack inequality, 
	\begin{align} \label{harnack} u(x)-v(x)= O(1)\,\kappa.
	\end{align}
Let   $h:\mathbb{R}^n\setminus\{x\}\to \mathbb{R}$ be  the harmonic function given by
	\begin{align*} 
	h(y)=\frac{1}{\log 2}\,\kappa\,(\log r(x)-\log|y-x|)
	\end{align*}
	if $n=2$ and 
		\begin{align*} 
		h(y)=\frac{r(x)^{n-2}}{2^{n-2}-1}\,\kappa\,(|y-x|^{2-n}-r(x)^{2-n})
	\end{align*} 
	if $n\geq 3$. 
	Note that    $$h(y)=\begin{dcases}
	\kappa\qquad&\text{if $|y-x|={r(x)}/2$ and}\\
0	 &\text{if $|y-x|=r(x)$.} 
	\end{dcases}
	$$ By the maximum principle,  $u-v \geq h$ in $\bar B_{r(x)}(x)\setminus  B_{r(x)/2}(x)$ with equality for all $z\in \partial \Omega\cap \partial B_{r(x)}(x)$. At such $z$, there holds 
	$$
	0\geq  \mu(\Omega)\cdot  D(u-v) -\mu(\Omega)\cdot D h=-1+r(x)+\frac{1}{\log 2}\,\frac{\kappa}{r(x)}\quad\,\,\,\,\,
	$$
if $n=2$	and 
	$$
	0\geq  \mu(\Omega)\cdot  D(u-v) -\mu(\Omega)\cdot D h=-1+r(x)+\frac{n-2}{2^{n-2}-1}\,\frac{\kappa}{r(x)}
	$$
	if $n\geq 3$.
It follows that $\kappa=O(1)\,r(x)$. 
	In conjunction with \eqref{harnack} and the gradient estimate for positive harmonic functions,   $$|D (u-v)(x)|=O(1)\, \frac{u(x)-v(x)}{r(x)}=O(1) .$$  
	Since $D v(x)=0$, the assertion follows.
\end{proof}

\begin{coro}[{cf. \cite[Proposition 2.1]{wang2015structure}}] \label{gradient}
	There holds $|Du| < 1$ in $ \Omega$.
\end{coro}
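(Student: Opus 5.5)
We prove $|Du|<1$ in $\Omega$ via a maximum principle for the subharmonic function $|Du|^2$, using Lemma \ref{prelim gradient bound} to handle the noncompactness of $\Omega$. Let
$$
M=\sup\{|Du(x)|^2:x\in\Omega\},
$$
which is finite by Lemma \ref{prelim gradient bound}. Since $|Du|=1$ on $\partial\Omega$ and $\Omega\neq\emptyset$, we have $M\geq 1$. We first show $M\le 1$, and then rule out equality.

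Suppose, for a contradiction, that $M>1$. Choose $x_k\in\Omega$ with $|Du(x_k)|^2\to M$. We treat two cases.

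\emph{Case 1: $r(x_k)$ stays bounded away from $0$ along a subsequence.} Translate and set $u_k(y)=u(y+x_k)$ on $\Omega_k=\Omega-x_k$. By Lemma \ref{distance}, $r(x_k)\le 1$. By Lemma \ref{prelim gradient bound}, $|Du_k|\le O(1)$. Since $u_k(0)\le O(1)\,r(x_k)$, the functions $u_k$ are locally bounded. Standard elliptic estimates, with boundary estimates near $\partial\Omega_k$, give a subsequential limit $u_\infty$ on a limit domain containing $B_\delta(0)$, where $\delta=\liminf r(x_k)>0$. The limit satisfies $-\Delta u_\infty=n$ and $|Du_\infty|^2\le M$, with $|Du_\infty(0)|^2=M$. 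Interior convergence on $B_\delta(0)$ suffices for this, since gradients converge locally smoothly in the interior.

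By Lemma \ref{subharmonic}, $|Du_\infty|^2$ is subharmonic with $\tfrac12\Delta|Du_\infty|^2\ge n>0$. A function with strictly positive Laplacian cannot attain an interior maximum, which is a contradiction.

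\emph{Case 2: $r(x_k)\to 0$.} Let $z_k\in\partial\Omega$ be a nearest point to $x_k$. Then $|Du(x_k)|^2\to M>1=|Du(z_k)|^2$. We need a modulus of continuity for $Du$ near the boundary that is uniform in $k$. To get it, I would rescale at scale $r(x_k)$, writing
$$
w_k(y)=r(x_k)^{-1}\,u\bigl(x_k+r(x_k)\,y\bigr).
$$
Then $-\Delta w_k=n\,r(x_k)\to0$, $|Dw_k|\le O(1)$, and $w_k$ vanishes at some point of $\partial B_1(0)$. The limit is a harmonic function with bounded gradient on the limit domain. By Lemma \ref{subharmonic}, $\Delta|Dw_k|^2\ge 0$, so the maximum principle argument of Case 1 again applies: $|Dw_\infty|^2$ attains its supremum $M$ at an interior point of the harmonic limit. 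By the strong maximum principle, $|Dw_\infty|^2\equiv M$ on the component containing $0$.

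The difficulty is to obtain the contradiction with the boundary value $1$ in this case. This requires either boundary regularity of the blowup, which fails without curvature bounds, or a barrier argument. I would instead try the simpler route of avoiding Case 2 altogether. Consider
$$
\psi=|Du|^2+\epsilon\,\tfrac{2}{n}\,u .
$$
Its Laplacian satisfies $\Delta\psi\ge 2n-2\epsilon>0$. For $0<\epsilon<n$, apply the maximum principle on $\Omega\cap B_R$ and let $R\to\infty$, using the bounds $|Du|\le O(1)$ and $u\le O(1)$. The bound on $u$ follows from $r\le1$ together with the gradient bound. Then use a Phragmén–Lindelöf-type perturbation such as $-\delta\,|y|^2$ to control the behaviour at infinity. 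This gives $\sup\psi\le 1+\epsilon\cdot O(1)$ up to the limit at infinity, and letting $\epsilon\to0$ yields $M\le1$. The main obstacle is exactly this passage to infinity. The term $-\delta|y|^2$ has Laplacian $-2n\delta$, which is compatible with $\Delta\psi\ge 2n$ provided $\delta<1$. Since $\psi$ is bounded, $\psi-\delta|y|^2$ attains its maximum on the closure of $\Omega$, and the positive Laplacian forces that maximum onto $\partial\Omega$, where the value is at most $1$. Letting $\delta\to0$ gives $|Du|^2\le1$. This handles both cases at once, so I would use this argument rather than the blowups above.

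Finally, to get the strict inequality, note that $|Du|^2\le1$ is subharmonic and, by Lemma \ref{subharmonic}, not constant on any component. Indeed, $\Delta|Du|^2\ge 2n>0$. Hence it cannot attain the value $1$ at an interior point, so $|Du|<1$ in $\Omega$.
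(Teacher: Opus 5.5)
Your final argument is correct and is essentially the paper's proof. The paper likewise subtracts $\varepsilon|y|^2$ (with $0<\varepsilon<1$) from the strictly subharmonic function $|Du|^2$, uses Lemma \ref{prelim gradient bound} to force the maximum onto $\partial\Omega$, lets $\varepsilon\to0$, and then uses Lemma \ref{subharmonic} to rule out $|Du|=1$ in the interior. Your extra term $\epsilon\,\tfrac{2}{n}\,u$ and the blowup case analysis you discard are both unnecessary. Note, though, that keeping $\epsilon>0$ fixed already gives $|Du|^2\le 1-\tfrac{2\epsilon}{n}\,u<1$ directly, without the final strictness step.
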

\begin{proof}
Let $0<\varepsilon<1$. By Lemma \ref{subharmonic}, $\psi_\varepsilon\in C^\infty(\bar\Omega)$ given by
$$
\psi _\varepsilon(y)=|Du(y)|^2-\varepsilon\,|y|^2
$$ 	
is strictly subharmonic. By the maximum principle and Lemma \ref{prelim gradient bound},  $\psi _\varepsilon$  attains its maximum on $\partial \Omega$. In particular, $|Du(y)|^2 - \varepsilon\, |y|^2 < 1$ for all $y \in \Omega$. 
 Letting $\varepsilon\to 0$, we obtain that $|Du(y)|\leq 1$ for all $y\in \Omega$. By the strong maximum principle, either $|Du|=1$ in $\Omega$ or $|Du|<1$ in $\Omega$. In view of Lemma \ref{subharmonic}, the former alternative is impossible. This completes the proof of the corollary. 
\end{proof}
\begin{rema}
The estimate
$$
\frac12\,|D u|^2\leq F(u)
$$ for solutions of \eqref{Serrin generalized} where $F\in C^\infty(\mathbb{R})$ is such that $F(0)=1/2$ and $F'=f$ proven in \cite[Proposition 6.3]{wang2015structure} gives
$$
|Du|^2\leq 1+\frac{n}{2}\,u
$$
in the case where $f=n$; see also \cite{Modica}. Note that, unlike  Corollary \ref{gradient}, this estimate is not optimal.
\end{rema}
\begin{coro} \label{function} There holds $0<u<1$ in $ \Omega$.
\end{coro}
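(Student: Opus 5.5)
Since $u>0$ in $\Omega$ is part of \eqref{Serrin}, only the upper bound $u<1$ needs proof. I will derive it from Corollary \ref{gradient} and Lemma \ref{distance} by integrating $Du$ along a segment to the nearest boundary point.

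Fix $x\in\Omega$ and choose $z\in\partial\Omega\cap\partial B_{r(x)}(x)$. The open segment from $x$ to $z$ lies in $B_{r(x)}(x)\subset\Omega$, and $u(z)=0$. Set $\gamma(t)=x+t\,(z-x)$ for $t\in[0,1]$. Since $u\in C^\infty(\bar\Omega)$, the fundamental theorem of calculus gives
\begin{align*}
u(x)=u(x)-u(z)=-\int_0^1 Du(\gamma(t))\cdot(z-x)\,dt\leq \int_0^1 |Du(\gamma(t))|\,|z-x|\,dt .
\end{align*}

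For $t\in[0,1)$ the point $\gamma(t)$ lies in $\Omega$, so Corollary \ref{gradient} gives $|Du(\gamma(t))|<1$. Since $Du$ is continuous, the integrand is strictly smaller than $|z-x|=r(x)$ on a set of positive measure, and therefore $u(x)<r(x)$. Lemma \ref{distance} gives $r(x)\leq 1$, hence $u(x)<1$.

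The only step needing care is that the inequality is strict. I get strictness from the pointwise strict bound $|Du|<1$ in the interior of $\Omega$ together with continuity of $Du$ along the segment; otherwise the argument is just the chain of the two earlier estimates.
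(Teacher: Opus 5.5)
Your proof is correct and follows the paper's approach: the paper's proof just cites Lemma \ref{distance} and Corollary \ref{gradient}. Integrating $Du$ along the segment to a nearest boundary point, with strictness coming from $|Du|<1$ in $\Omega$ and continuity, is a faithful way to spell that out.
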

\begin{proof}
	This follows from Lemma \ref{distance} and Corollary \ref{gradient}.
\end{proof}

\begin{lem} \label{second derivative identity}
	There holds 
	$$
	D^2u(Du,Du)= H-n
	$$
	on $\partial \Omega$.
\end{lem}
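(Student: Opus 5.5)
Since $u=0$ on $\partial\Omega$ and $u$ is smooth up to the boundary, I will compute $\Delta u$ on $\partial\Omega$ by splitting it into a tangential part and a normal part. Writing $\nu=\mu(\Omega)=-Du$ for the outward unit normal, the standard decomposition of the Laplacian along a hypersurface reads
$$
\Delta u = \Delta_{\partial\Omega} u + H\,\nu\cdot Du + D^2u(\nu,\nu)
$$
on $\partial\Omega$. Here $\Delta_{\partial\Omega}$ is the Laplace--Beltrami operator of $\partial\Omega$, and $H$ is the mean curvature with respect to $\nu$, with the sign convention that spheres bounding balls have positive mean curvature. One way to check this formula is to take an orthonormal frame $e_1,\dots,e_{n-1}$ tangent to $\partial\Omega$ and write $\sum_i D^2u(e_i,e_i)=\Delta_{\partial\Omega}u+\sum_i \langle D_{e_i}e_i,\nu\rangle\,\nu\cdot Du$. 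The sum $-\sum_i\langle D_{e_i}e_i,\nu\rangle$ is the mean curvature with respect to the outward normal, so the middle term equals $H\,\nu\cdot Du$ up to the sign check described in the last paragraph.

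Next I insert the boundary data into this formula. Since $u\equiv 0$ on $\partial\Omega$, we have $\Delta_{\partial\Omega}u=0$. Since $\nu=-Du$ and $|Du|=1$, we have $\nu\cdot Du=-1$. Finally $D^2u(\nu,\nu)=D^2u(Du,Du)$, because the Hessian is quadratic in its arguments. Together with $\Delta u=-n$, the formula becomes
$$
-n = -H + D^2u(Du,Du),
$$
which rearranges to $D^2u(Du,Du)=H-n$, as asserted.

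The only real obstacle is getting the sign of the curvature term right, and I will fix it with the unit ball $\Omega=B_1(0)$ and $u=(1-|y|^2)/2$. Here $H=n-1$ and $D^2u=-\operatorname{Id}$, so $D^2u(Du,Du)=-1=(n-1)-n$, which matches the claimed identity. As an independent check, one can differentiate the relation $|Du|^2=1$ along $\partial\Omega$. This shows that $D^2u(Du,X)=0$ for every tangent vector $X$, i.e., $Du$ is an eigenvector of $D^2u$ at boundary points. The tangential block of $D^2u$ is then $-\mathrm{II}$ with respect to $\nu$, since $u$ vanishes on $\partial\Omega$ and $\partial_\nu u=-1$. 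Taking the trace of $D^2u$ therefore gives $-n=-H+D^2u(Du,Du)$ again, confirming the same sign.
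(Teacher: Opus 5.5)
Your proof is correct and is essentially the paper's argument: the paper applies the boundary decomposition $\Delta v=\Delta^{\partial \Omega}v+D^2v(\mu(\Omega),\mu(\Omega))+H\,\mu(\Omega)\cdot Dv$ with $v=u$ and inserts $u=0$, $\mu(\Omega)=-Du$, $|Du|=1$, and $\Delta u=-n$. One small slip: since $D^2u(e_i,e_i)=e_i(e_iu)-(D_{e_i}e_i)u$, the frame computation gives $\Delta_{\partial\Omega}u-\sum_i\langle D_{e_i}e_i,\nu\rangle\,\nu\cdot Du=\Delta_{\partial\Omega}u+H\,\nu\cdot Du$ directly (you wrote $+\sum_i$), so the sign needs no ball check, and your second argument via the tangential Hessian $-\mathrm{II}$ is in any case a correct self-contained proof.
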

\begin{proof}
There holds
$$
\Delta v=\Delta^{\partial \Omega}v +D^2v(\mu(\Omega),\mu(\Omega))+H\,\mu(\Omega)\cdot Dv
$$
for every $v\in C^\infty(\bar \Omega)$. The assertion follows from this, taking $v=u$.
\end{proof}
\begin{lem}[{cf. \cite[Lemma 6.3]{wang2015structure} and \cite{CaffarelliaKenig}}] \label{mean curvature trivial bound}
	There holds $H\leq n$ on $ \partial \Omega$. 
\end{lem}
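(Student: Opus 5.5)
By Lemma \ref{second derivative identity}, it suffices to show that $D^2u(Du,Du)\leq 0$ on $\partial\Omega$. I will get this from the fact that $|Du|^2$ attains its maximum on the boundary.

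By Corollary \ref{gradient}, $|Du|^2<1$ in $\Omega$, while $|Du|^2=1$ on $\partial\Omega$. Hence every boundary point $z$ is a maximum of $|Du|^2$ on $\bar\Omega$. It follows that the outward normal derivative of $|Du|^2$ at $z$ is nonnegative, that is, $\mu(\Omega)\cdot D|Du|^2\geq 0$.

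Now compute this derivative. There holds
$$
\mu(\Omega)\cdot D|Du|^2=2\,D^2u(Du,\mu(\Omega)).
$$
Since $\mu(\Omega)=-Du$ on $\partial\Omega$, this equals $-2\,D^2u(Du,Du)$. The inequality from the previous paragraph therefore gives $D^2u(Du,Du)\leq 0$. Combined with Lemma \ref{second derivative identity}, this yields $H-n\leq 0$, which is the assertion.

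There is no real obstacle beyond what is already established. The one point to check is that the normal derivative inequality at a boundary maximum needs only $|Du|^2\leq 1$ in $\bar\Omega$ and smoothness of $u$ up to the boundary. Both hold by Corollary \ref{gradient} and the standing assumption $u\in C^\infty(\bar\Omega)$. Hopf's lemma would even give strict inequality, but it is not needed.
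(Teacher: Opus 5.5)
Your proposal is correct and is essentially the paper's proof. The paper likewise uses Corollary \ref{gradient} to get $\mu(\Omega)\cdot D|Du|^2\geq 0$ on $\partial\Omega$, rewrites this as $-2\,D^2u(Du,Du)\geq 0$ using $\mu(\Omega)=-Du$, and concludes with Lemma \ref{second derivative identity}.
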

\begin{proof}
By Corollary \ref{gradient},
$$
 \mu(\Omega)\cdot D|Du|^2\geq 0
$$	
and $\mu(\Omega)\cdot D|Du|^2=-2\,D^2u(Du,Du)$ on $\partial \Omega$. 
The assertion follows from this and   Lemma \ref{second derivative identity}. 
\end{proof}
\section{Asymptotics}

Given $\lambda>0$, let 
$$
A(\lambda)=|B_\lambda(0)\cap \partial \Omega|\qquad\text{and}\qquad V(\lambda)=|B_\lambda(0)\cap \Omega|.
$$
Note that $V(\lambda)\leq |B_\lambda(0)|= \lambda^n\,|B_1(0)|$ for all $\lambda>0$ and that $V$ is nondecreasing and Lipschitz. For almost every $\lambda>0$, $\partial B_\lambda(0)$ and  $\partial \Omega$ intersect transversely. 
$V$ is differentiable at such $\lambda>0$ and there holds
$$
V'(\lambda)=|\partial B_\lambda(0)\cap \Omega|.
$$
Moreover, by the divergence theorem,
$$
A(\lambda)=-\int_{B_\lambda(0)\cap \partial \Omega} \mu(\Omega)\cdot Du=n\,V(\lambda)+\int_{\Omega\cap \partial B_\lambda(0)}\frac{x}{|x|}\cdot Du. 
$$
Let $E:(0,\infty)\to \mathbb{R}$ be given by 
$$
E(\lambda)=\int_{\Omega\cap \partial B_\lambda(0)}\frac{x}{|x|}\cdot Du.
$$
By Corollary \ref{gradient}, $$|E(\lambda)|\leq V'(\lambda)$$ for all $\lambda>0$ as above and  
\begin{align} \label{ODE} 
A(\lambda)=n\,V(\lambda)+E(\lambda).
\end{align} 
\begin{lem} \label{lambda prep}
	There is a sequence $\{m_k\}_{k=1}^\infty$ of positive integers with $m_k\to\infty$ such that 
	$$
	\frac{V(m_k+2)-V(m_k)}{V(m_k)}=o(1).
	$$
\end{lem}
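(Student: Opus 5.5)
The idea is to argue by contradiction and play the polynomial volume bound $V(\lambda)\leq \lambda^n\,|B_1(0)|$ against the exponential growth that would follow if the relative increments $(V(m+2)-V(m))/V(m)$ stayed bounded away from zero. No information from \eqref{Serrin} is needed beyond the facts that $\Omega$ is nonempty and $V$ is nondecreasing.

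First, since $\Omega$ is open and nonempty, there is $m_0\in\mathbb{N}$ such that $V(m)>0$ for all integers $m\geq m_0$, so the quotient in the statement is well defined for such $m$. It suffices to show that
$$
\liminf_{m\to\infty}\frac{V(m+2)-V(m)}{V(m)}=0,
$$
where the $\liminf$ is taken over integers $m\geq m_0$. A subsequence realizing this $\liminf$ then gives the required $\{m_k\}_{k=1}^\infty$.

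Suppose, for a contradiction, that there are $\delta>0$ and an integer $m_1\geq m_0$ such that
$$
V(m+2)\geq (1+\delta)\,V(m)
$$
for all integers $m\geq m_1$. Iterating this along $m=m_1,\,m_1+2,\,m_1+4,\dots$ yields, for every $j\in\mathbb{N}$,
$$
(1+\delta)^j\,V(m_1)\leq V(m_1+2j)\leq (m_1+2j)^n\,|B_1(0)|.
$$
Since $V(m_1)>0$, the left-hand side grows exponentially in $j$ while the right-hand side grows only polynomially. Letting $j\to\infty$ gives a contradiction.

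There is no real obstacle here. The only point requiring care is that $V(m)>0$, so that the quotient makes sense. This is handled by the choice of $m_0$ above. The argument also covers the case where $\Omega$ has finite volume, in which the quotient simply tends to $0$ along all integers.
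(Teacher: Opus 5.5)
Your proof is correct and is essentially the paper's argument: both play the polynomial bound $V(\lambda)\leq \lambda^n\,|B_1(0)|$ against the geometric growth of $V$ that relative increments bounded away from zero would force. The paper argues directly, summing $\frac{V(m+2)-V(m)}{V(m+2)}\leq \int_{V(m)}^{V(m+2)}\frac{1}{t}\,\mathrm{d}t$ over $1\leq m\leq M$ to get an $O(\log M)$ bound, which even shows that the quotient is small for all but $o(M)$ of the integers $m\leq M$. You instead argue by contradiction via the iteration $V(m_1+2j)\geq (1+\delta)^j\,V(m_1)$, and either version suffices for the lemma.
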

\begin{proof}
	We may assume that $V(1)>0$. 
For every integer $M\geq 1$, there holds
	\begin{align*} 
	\sum_{m=1}^M \frac{V(m+2)-V(m)}{V(m+2)}&\leq \sum_{m=1}^M\int_{V(m)}^{V(m+2)}\frac{1}{t}\,\mathrm{d}t\\&\leq 2\,\int_{V(1)}^{V(M+2)}\frac{1}{t}\,\mathrm{d}t\\&=2\,\log\frac{V(M+2)}{V(1)}\\&=O(1)\,\log M.
	\end{align*} 
It follows that there is a sequence $\{m_k\}_{k=1}^\infty$ of positive integers with $m_k\to\infty$ such that
$$
\frac{V(m_k+2)-V(m_k)}{V(m_k+2)}=o(1).
$$
In particular, 
$$
\frac{V(m_k+2)}{V(m_k)}=1+o(1).
$$
The assertion follows from these estimates. 
\end{proof}
\begin{prop} \label{asymptotics}
	There is a sequence $\{\lambda_k\}_{k=1}^\infty$ of positive numbers with $\lambda_k\to\infty$ such that
	\begin{align} \label{asymptotics zero}
	\frac{V(\lambda_k+1)-V(\lambda_k)}{A(\lambda_k)}=o(1)
	\end{align} 
	and
	\begin{align} \label{asymptotic property}
	\frac{n\,V(\lambda_k)}{A(\lambda_k)}=1+o(1).
	\end{align}
\end{prop}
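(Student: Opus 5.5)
The plan is to combine Lemma \ref{lambda prep} with the identity \eqref{ODE} and the bound $|E(\lambda)|\leq V'(\lambda)$, choosing $\lambda_k$ inside the unit interval $[m_k,m_k+1]$ at a point where $V'$ is no larger than its average.

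First, since $\Omega$ is nonempty, $V(m)>0$ for all sufficiently large $m$. The proof of Lemma \ref{lambda prep} therefore applies after starting the sum at such an index (equivalently, after a harmless translation). The bound $V(M+2)=O(1)\,M^n$ is what keeps the logarithm under control. This works whether or not $\Omega$ has finite volume. So let $\{m_k\}_{k=1}^\infty$ be as in Lemma \ref{lambda prep}, with
$$
V(m_k+2)-V(m_k)=o(1)\,V(m_k).
$$

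Next, $V$ is Lipschitz, and $\partial B_\lambda(0)$ meets $\partial\Omega$ transversely for almost every $\lambda$. Hence
$$
\int_{m_k}^{m_k+1}V'(\lambda)\,\mathrm{d}\lambda=V(m_k+1)-V(m_k).
$$
It follows that there is $\lambda_k\in[m_k,m_k+1]$ with the following properties: the intersection is transverse there, \eqref{ODE} holds there, and
$$
V'(\lambda_k)\leq V(m_k+1)-V(m_k)\leq V(m_k+2)-V(m_k)=o(1)\,V(m_k).
$$
By the bound $|E(\lambda_k)|\leq V'(\lambda_k)$, a consequence of Corollary \ref{gradient}, and since $V$ is nondecreasing, we get $|E(\lambda_k)|=o(1)\,V(\lambda_k)$. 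Then \eqref{ODE} gives
$$
A(\lambda_k)=(n+o(1))\,V(\lambda_k).
$$
In particular, $A(\lambda_k)>0$ for large $k$, and \eqref{asymptotic property} follows.

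For \eqref{asymptotics zero}, note that $[\lambda_k,\lambda_k+1]\subset[m_k,m_k+2]$ and that $V$ is monotone. Hence
$$
V(\lambda_k+1)-V(\lambda_k)\leq V(m_k+2)-V(m_k)=o(1)\,V(m_k)\leq o(1)\,V(\lambda_k)=o(1)\,A(\lambda_k),
$$
where the last step uses \eqref{asymptotic property}. Finally, $\lambda_k\geq m_k\to\infty$.

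The only delicate point I expect is the selection of $\lambda_k$. It must simultaneously be a transversal radius, which is a full-measure condition, and satisfy the averaged bound on $V'$. This is resolved by the observation that the set where $V'\leq$ its average on $[m_k,m_k+1]$ has positive measure, so it intersects the full-measure set of transversal radii.
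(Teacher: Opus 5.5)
Your proposal is correct and follows essentially the same route as the paper. The paper likewise picks a transversal radius $\lambda_k\in(m_k,m_k+1)$ with $V'(\lambda_k)\leq V(m_k+1)-V(m_k)$, bounds $E(\lambda_k)$ by $V'(\lambda_k)=o(1)\,V(\lambda_k)$ via Corollary \ref{gradient}, and then uses the monotonicity of $V$ together with $[\lambda_k,\lambda_k+1]\subset[m_k,m_k+2]$ to obtain \eqref{asymptotics zero} from \eqref{asymptotic property}.
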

\begin{proof}
		Let $\{m_k\}_{k=1}^\infty$ be as in Lemma \ref{lambda prep}. Since $V$ is Lipschitz, 
		there is $ \lambda_k\in (m_k,m_k+1)$  such that $\partial B_{\lambda_k}(0)$ and $\partial \Omega$ intersect transversely and  $$
	V'( \lambda_k)\leq V(m_k+1)-V(m_k).
	$$
Using also that $V$ is nondecreasing,
	$$
	V'( \lambda_k)\leq V(m_k+2)-V(m_k)=o(1)\,V(m_k)\leq o(1)\,V(\lambda_k).
	$$
	In conjunction with \eqref{ODE}, $$A(\lambda_k)=(1+o(1))\,n\,V( \lambda_k).$$ Note that 
	$$
	V(\lambda_k+1)-V( \lambda_k)\leq V(m_k+2)-V(m_k)=o(1)\,V(m_k).
	$$
	Using that 
	$$
	n\,V(m_k)\leq n\,V(  \lambda_k)=(1+o(1))\,A( \lambda_k),
	$$
	the assertion follows.
\end{proof}

\section{Finite volume}
Let $\varphi\in C^\infty(\bar\Omega)$ be given by 
\begin{align} \label{varphidef} 
\varphi(x)=u(x)+\frac12\,|Du(x)|^2.
\end{align} 
\begin{lem} \label{phi}
	There holds 
	$$
	-\Delta \varphi \leq 0
	$$
	in $\Omega$ and 
	$$
	\varphi=\frac12\qquad\text{and}\qquad \mu(\Omega)\cdot D\varphi=n-1-H
	$$
	on $\partial \Omega$. 	
\end{lem}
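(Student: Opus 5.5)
The plan is to treat the three assertions of Lemma \ref{phi} one at a time, using the Bochner identity \eqref{Laplacian} for the interior inequality and Lemma \ref{second derivative identity} for the boundary derivative.

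\emph{Subharmonicity.} Since $-\Delta u=n$, we have $\Delta u=-n$. By \eqref{Laplacian} and \eqref{estimate},
$$
\frac12\,\Delta|Du|^2=|D^2u|^2\geq n .
$$
Hence $\Delta\varphi=-n+|D^2u|^2\geq 0$, which is the same estimate as in Lemma \ref{subharmonic}.

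\emph{Boundary value.} On $\partial\Omega$ we have $u=0$ and $|Du|=1$, so $\varphi=0+\frac12=\frac12$ there.

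\emph{Normal derivative.} Recall that $\mu(\Omega)=-Du$ on $\partial\Omega$. Then
$$
\mu(\Omega)\cdot D\varphi=\mu(\Omega)\cdot Du+\frac12\,\mu(\Omega)\cdot D|Du|^2 .
$$
The first term equals $-|Du|^2=-1$. For the second term, $D|Du|^2=2\,D^2u(Du,\cdot)$, so
$$
\frac12\,\mu(\Omega)\cdot D|Du|^2=-D^2u(Du,Du).
$$
By Lemma \ref{second derivative identity}, $D^2u(Du,Du)=H-n$ on $\partial\Omega$. Adding the two terms gives
$$
\mu(\Omega)\cdot D\varphi=-1-(H-n)=n-1-H .
$$

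The only point needing care is the sign convention. The normal $\mu(\Omega)$ is the outward normal, which equals $-Du$ because $u>0$ inside and $u=0$ on the boundary. The mean curvature $H$ in the decomposition of $\Delta$ used in Lemma \ref{second derivative identity} is taken with respect to this same normal. Once these conventions are checked, every step is a direct computation.
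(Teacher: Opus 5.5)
Your proof is correct and is essentially the paper's argument. The paper simply states that the lemma follows from Lemma \ref{subharmonic}, which gives $\Delta\varphi=-n+|D^2u|^2\geq 0$, and from Lemma \ref{second derivative identity}, which gives $\mu(\Omega)\cdot D\varphi=-1-D^2u(Du,Du)=n-1-H$; you have carried out these computations accurately, including the sign conventions.
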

\begin{proof}
	This follows from Lemma \ref{subharmonic} and Lemma \ref{second derivative identity}.
\end{proof}
\begin{coro} \label{ball characterization 2}
	Assume that $\Omega$ is connected  and that $\varphi=1/2$ in $\Omega$. There is $x\in \mathbb{R}^n$ such that $\Omega=B_1(x)$.
\end{coro}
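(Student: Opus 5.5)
The plan is to reduce Corollary \ref{ball characterization 2} to Corollary \ref{ball characterization}. By that corollary, it suffices to show that $\frac12\,\Delta |Du|^2=n$ in $\Omega$.

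Since $\varphi=u+\frac12\,|Du|^2$ is constant in $\Omega$, we have $\Delta\varphi=0$ there. Using $\Delta u=-n$, this gives
$$
0=\Delta \varphi=-n+\frac12\,\Delta|Du|^2 .
$$
Hence $\frac12\,\Delta|Du|^2=n$ in $\Omega$. Equivalently, by \eqref{Laplacian}, $|D^2u|^2=n$, which is the equality case of \eqref{estimate}.

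Since $\Omega$ is connected by assumption, Corollary \ref{ball characterization} applies directly and yields $x\in\mathbb{R}^n$ with $\Omega=B_1(x)$.

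There is no real obstacle here; the only step to check is that $\varphi\in C^\infty(\bar\Omega)$ being constant on the open set $\Omega$ justifies taking $\Delta\varphi=0$ pointwise, which is immediate.
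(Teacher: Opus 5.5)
Your proposal is correct and follows essentially the same route as the paper: constancy of $\varphi$ gives $\Delta\varphi=0$, hence $\frac12\,\Delta|Du|^2=n$ (the equality case of the computation behind Lemma \ref{phi}), and Corollary \ref{ball characterization} then yields $\Omega=B_1(x)$.
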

\begin{proof}
	In view of Lemma \ref{subharmonic}, the proof of Lemma \ref{phi} shows that
	$$
	\frac12\,\Delta |Du|^2=n.
	$$
	The assertion now follows from Corollary \ref{ball characterization}.
\end{proof} 
\begin{lem} \label{phi bound} 
	There holds 
	$
	\varphi< 3/2
	$
	in $\Omega$.
\end{lem}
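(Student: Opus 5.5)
This is immediate from earlier results; no maximum principle argument for $\varphi$ is needed. Since $\varphi=u+\frac12|Du|^2$, it suffices to bound $u$ and $|Du|^2$ separately.

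Fix $x\in\Omega$. The two bounds are as follows.

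\begin{enumerate}[(i)]
\item By Corollary \ref{function}, $u(x)<1$. Recall that this rests on two facts. Lemma \ref{distance} gives $r(x)\le 1$. Corollary \ref{gradient} gives $|Du|<1$ in $\Omega$. Integrating along a segment from $x$ to a nearest point $z\in\partial\Omega$, where $u(z)=0$, we get
$$u(x)\le \sup|Du|\cdot r(x)\le 1.$$
The strict inequality follows because $|Du|<1$ holds pointwise along the open segment, which lies in $\Omega$ since $B_{r(x)}(x)\subset \Omega$.
\item By Corollary \ref{gradient}, $\frac12|Du(x)|^2<\frac12$.
\end{enumerate}

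Adding (i) and (ii) gives $\varphi(x)<1+\frac12=\frac32$, which is the assertion.

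There is no real obstacle here. The only point requiring care is strictness, and that is already built into the strict inequalities of Corollary \ref{function} and Corollary \ref{gradient}, both of which hold at every point of $\Omega$. The nontrivial analytic input, namely the global gradient bound of Lemma \ref{prelim gradient bound} that enables the maximum principle argument in Corollary \ref{gradient}, has already been established.
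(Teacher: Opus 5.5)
Your proposal is correct and is essentially the paper's proof: the paper also just combines $u<1$ from Corollary \ref{function} with $|Du|<1$ from Corollary \ref{gradient}. Your recap of why Corollary \ref{function} holds, including the strictness via the open segment inside $B_{r(x)}(x)$, is accurate.
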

\begin{proof}
 The assertion follows from   Corollary \ref{gradient} and Corollary \ref{function}.
\end{proof}

The proof of the following proposition adapts that of \cite[Theorem 1]{Weinberger} to the case where $\Omega$ is not a-priori bounded.
\begin{prop} \label{infinite volume}
	Assume that $\Omega$ is connected and  has finite volume. There is $x\in \mathbb{R}^n$ such that $\Omega=B_1(x)$.
\end{prop}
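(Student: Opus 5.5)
We adapt Weinberger's argument, which combines a Pohozaev-type identity with the subharmonicity of the $P$-function $\varphi$ from \eqref{varphidef}. The goal is to show
$$
\int_\Omega \Delta\varphi=0 .
$$
By Lemma \ref{phi}, $\Delta\varphi\geq 0$ in $\Omega$. So this identity forces $\Delta\varphi= 0$, i.e., $\frac12\Delta|Du|^2=n$, and Corollary \ref{ball characterization} then gives $\Omega=B_1(x)$. The work is to justify every integration by parts on a possibly unbounded domain of finite volume. We use cutoffs $\eta_R(y)=\eta(|y|/R)$ with $\eta=1$ on $[0,1]$, $\eta=0$ on $[2,\infty)$, and $|D\eta_R|=O(1)/R$. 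The error terms are controlled by Corollary \ref{gradient}, Corollary \ref{function}, Lemma \ref{phi bound}, and $|\Omega|<\infty$.

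\textbf{Step 1 (a priori integrability).} We need $\int_\Omega|D^2u|^2<\infty$ and $|\partial\Omega|<\infty$. For the first, integrate $\eta_R^2\,\frac12\Delta|Du|^2=\eta_R^2|D^2u|^2$ by parts once. The boundary term equals $-\int_{\partial\Omega}\eta_R^2 D^2u(Du,Du)$, and by Lemma \ref{second derivative identity} this is $\int_{\partial\Omega}\eta_R^2(n-H)$. This term has a sign (Lemma \ref{mean curvature trivial bound}) but is not obviously bounded, so I would avoid it by working with $\varphi$ instead. By Lemma \ref{phi},
$$
\int_\Omega\eta_R^2\Delta\varphi=\int_{\partial\Omega}\eta_R^2(n-1-H)-\int_\Omega D\eta_R^2\cdot D\varphi .
$$
For the boundary area, apply the divergence theorem to $\eta_R Du$:
$$
\int_{\partial\Omega}\eta_R=n\int_\Omega\eta_R+\int_\Omega D\eta_R\cdot Du\leq (n+O(1)/R)\,|\Omega| .
$$
Letting $R\to\infty$ gives $|\partial\Omega|\leq n|\Omega|$. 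The same computation gives equality, $|\partial\Omega|=n|\Omega|$.

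\textbf{Step 2 (Pohozaev identity).} Apply the Rellich–Pohozaev identity to $x\cdot Du$, cut off by $\eta_R$. On $\partial\Omega$ we have $Du=-\mu$ and $|Du|=1$. The identity should give
$$
\int_{\partial\Omega}x\cdot\mu=(n+2)\int_\Omega u
$$
up to error terms. These errors have the form $\int_\Omega|D\eta_R|\,|x|\,(|Du|^2+u)=O(1)\,|\Omega\cap\{R<|x|<2R\}|\to 0$, since $|x|\,|D\eta_R|=O(1)$. Since $\int_{\partial\Omega}x\cdot\mu$ is also handled by cutoff, $n|\Omega|=\lim\int_\Omega \eta_R\operatorname{div}x$, and we obtain $(n+2)\int_\Omega u=n|\Omega|$ in the limit. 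Separately, $\int_\Omega|Du|^2=n\int_\Omega u$ by testing the equation with $\eta_R u$; the boundary term vanishes because $u=0$ on $\partial\Omega$.

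\textbf{Step 3 (Weinberger's conclusion).} Integrate $\varphi$ against the equation, with cutoffs. From $\Delta\varphi\ge0$ and $\varphi=\frac12$ on $\partial\Omega$, Weinberger derives $\varphi\le\frac12$ in the bounded case. Here we instead test $\Delta\varphi\geq 0$ against $\eta_R u\geq 0$ and integrate by parts twice. The boundary terms vanish because $u=0$, and using $-\Delta u=n$ this yields
$$
\int_\Omega u\,\Delta\varphi = n\int_\Omega\bigl(\tfrac12-\varphi\bigr)\ \text{(up to vanishing errors)} .
$$
The error terms involve $\varphi|D\eta_R||Du|$ and $u\,\varphi\,|\Delta\eta_R|$, both bounded by Lemma \ref{phi bound}. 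Combining this with Step 2, $\int_\Omega\varphi=\int_\Omega u+\frac12\int|Du|^2=\frac{n+2}{2}\int_\Omega u=\frac n2|\Omega|$. Hence $\int_\Omega u\,\Delta\varphi= 0$. Since $u>0$ and $\Delta\varphi\ge0$, this forces $\Delta\varphi=0$, and Corollary \ref{ball characterization} finishes the proof.

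\textbf{Main obstacle.} The main obstacle is the error terms containing second derivatives of $u$, in particular in the first integration by parts of Step 3. There $u\,D\varphi\cdot D\eta_R$ involves $D^2u$. I would integrate this term by parts once more, moving the derivative onto $u\,D\eta_R$, so that only $\varphi$, $Du$ and derivatives of $\eta_R$ appear. These are all bounded, and finite volume then kills them as $R\to\infty$.
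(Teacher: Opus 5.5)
\textbf{Your Pohozaev constant is off by a factor of $n$, so Step 3 does not follow as written.} With the normalization $-\Delta u=n$ used here, the identity is $\int_{\partial\Omega}x\cdot\mu(\Omega)=n\,(n+2)\int_\Omega u$, that is, $(n+2)\int_\Omega u=|\Omega|$. Your version, $(n+2)\int_\Omega u=n\,|\Omega|$, is the identity for the normalization $-\Delta u=1$. You can check this on the unit ball: $u=\frac12(1-|y|^2)$ gives $\int_{B_1(0)}u=|B_1(0)|/(n+2)$, and $\varphi\equiv\frac12$.

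With your constant you get $\int_\Omega\varphi=\frac n2\,|\Omega|$. Your own Step 3 identity then gives $\int_\Omega u\,\Delta\varphi=n\int_\Omega(\frac12-\varphi)=-\frac{n(n-1)}{2}\,|\Omega|<0$, which contradicts $u>0$ and $\Delta\varphi\geq 0$. So ``hence $\int_\Omega u\,\Delta\varphi=0$'' is a non sequitur.

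With the corrected constant, $\int_\Omega\varphi=\frac{n+2}{2}\int_\Omega u=\frac12\,|\Omega|$. Then $\int_\Omega u\,\Delta\varphi=\frac12\,|\partial\Omega|-n\int_\Omega\varphi=0$, using $|\partial\Omega|=n\,|\Omega|$ from Step 1. Note that not all boundary terms vanish, contrary to what you wrote. The second integration by parts produces $\frac12\int_{\partial\Omega}\eta_R$, and this is exactly where Step 1 enters. When passing to the limit on the left, use Fatou's lemma (or monotone cutoffs), since $u\,\Delta\varphi\geq 0$ is not known to be integrable in advance. This yields $\int_\Omega u\,\Delta\varphi\leq 0$, which suffices.

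After this fix your argument is correct, and it differs from the paper's in the final step. The paper proves $\varphi\leq\frac12$ by the maximum principle. On an unbounded domain this needs the extra observation that finite volume forces $r(x)\to 0$, and hence $u\to 0$ by Corollary \ref{gradient}, as $|x|\to\infty$. The paper then combines $\varphi\leq\frac12$ with $\int_\Omega\varphi=\frac12|\Omega|$, treating the unbounded and bounded cases separately.

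You instead pair $\Delta\varphi\geq 0$ with the positive weight $u$ via Green's second identity. This needs only the divergence-theorem identity $|\partial\Omega|=n\,|\Omega|$ and bounded cutoff errors. It therefore avoids any analysis of $\varphi$ at infinity and any case distinction, which is arguably cleaner on a domain that is not a priori bounded. Your dyadic cutoffs with $|x|\,|D\eta_R|=O(1)$ likewise replace the paper's selection of radii with $|(B_{\lambda_k+1}(0)\setminus B_{\lambda_k}(0))\cap\Omega|=o(1)/\lambda_k$.

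The opening goal $\int_\Omega\Delta\varphi=0$ and the discussion of $\int_\Omega|D^2u|^2$ in Step 1 are never used and can be dropped.
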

\begin{proof}

  Since $\Omega$ has finite volume,  there is a sequence $\{\lambda_k\}_{k=1}^\infty$ of positive numbers such that $\lambda_k\to \infty$ and 
	\begin{align} \label{volume bounds}
		|( B_{\lambda_k+1}(0)\setminus B_{\lambda_k}(0))\cap \Omega|=o(1)\,\frac1{\lambda_k}.
	\end{align}
	 Let $\eta_k \in C_c^\infty(\mathbb{R}^n),$ $k\geq 1,$ be such that
	 \begin{itemize}
	 \item[$\circ$]  $\eta_k(x)=1$ for all $x\in B_{\lambda_k}(0)$,
	 \item[$\circ$] $ \eta_k(x)=0$ for all $x\in \mathbb{R}^n\setminus B_{\lambda_k+1}(0)$,
	 \item[$\circ$] $|D \eta_k|=O(1)$, and  \item[$\circ$] $|D^2  \eta_k|=O(1)$.
	  \end{itemize}
	  Note that
	  $$
	  -\Delta (x\cdot Du)=2\,n.
	  $$
	   By the divergence theorem,
	 \begin{align*} 
&2\,n\,\int_{\Omega}\eta_k\,u-n\,\int_\Omega\eta_k\,x\cdot Du
\\&\qquad=-\int_{\Omega}\eta_k\,u\,\Delta(x\cdot Du)+\int_\Omega\eta_k\,(x\cdot Du)\,\Delta u 
\\&\qquad =\int_{\partial \Omega}\eta_k\, (x\cdot Du)\,\mu(\Omega)\cdot Du+\int_\Omega u\,D(x\cdot Du)\cdot  D\eta_k-\int_\Omega (x\cdot Du)\,Du\cdot  D\eta_k
\\
&\qquad =-\int_{\partial \Omega}\eta_k\, (x\cdot Du)-\int_\Omega u\,(x\cdot Du)\,\Delta  \eta_k-2\,\int_\Omega(x\cdot Du)\,Du\cdot  D\eta_k.	 
 	 \end{align*} 
Moreover, using that $\operatorname{div}x=n$, 
	 $$
	- \int_{\partial \Omega} \eta_k\, (x\cdot Du)=\int_{\partial \Omega}\eta_k\, \mu(\Omega)\cdot x=n\,\int_{\Omega}  \eta_k+\int_{\Omega} x\cdot D \eta_k.
	 $$
	 By \eqref{volume bounds},
	 $$
	 \int_\Omega x\cdot D \eta_k=o(1).
	 $$
	  By Corollary \ref{gradient}, Corollary \ref{function}, and \eqref{volume bounds},
	 $$
	 \int_{\Omega} u\,(x\cdot Du)\,\Delta \eta_k=o(1).
	 $$
	 	  By Corollary \ref{gradient} and \eqref{volume bounds},
	 $$
	 \int_{\Omega} (x\cdot Du)\,Du\cdot  D\eta_k=o(1).
	 $$
	 By the divergence theorem,
	 $$
	 \int_{\Omega} \eta_k\,x\cdot Du=-n\,\int_{\Omega}\eta_k\,u-\int_{\Omega} u\,x\cdot D \eta_k.
	 $$
	 	 By  Corollary \ref{function} and \eqref{volume bounds},
	 $$
	 \int_{\Omega} u\,x\cdot D \eta_k=o(1).
	 $$
	 By the dominated convergence theorem, using Corollary \ref{function}, we conclude that 
	 \begin{align} \label{equality} 
	 (n+2)\,\int_\Omega u=|\Omega|.
	 \end{align}

	 Suppose, for a contradiction, that $\Omega$ is unbounded. Let $\{x_\ell\}_{\ell=1}^\infty$ be a sequence of points in $\Omega$  such that $|x_\ell|\to \infty$.
	  Since $\Omega$ has finite volume, there holds $r(x_\ell)=o(1)$. By Corollary \ref{gradient}, $u(x_\ell)=o(1)$. Using Corollary \ref{gradient} again, we conclude that $\varphi(x_\ell)\leq 1/2+o(1)$. By Lemma \ref{phi} and the maximum principle, 
	  \begin{align} \label{varphi bound} 
	  \varphi \leq \frac12 
	  \end{align} 
	  in $\Omega$. 
	 By the divergence theorem, 
	 $$
	 \int_{\Omega} \eta_k\,|Du|^2= n\,\int_\Omega \eta_k\,u-\int_\Omega u\,Du\cdot D\eta_k.
	 $$
	By the dominated convergence theorem, using Corollary \ref{gradient}, Corollary \ref{function}, and \eqref{volume bounds}, 
	 $$
	 \int_\Omega |Du|^2=n\,\int_\Omega\,u.
	 $$
	 Thus,
	 $$
2\,\int_\Omega \varphi=2\,\int_\Omega u+\int_\Omega |Du|^2=(n+2)\,\int_{\Omega} u.	 
	 $$
In view of \eqref{equality} and \eqref{varphi bound}, we conclude that $\varphi=1/2$. By Corollary \ref{ball characterization 2}, there is $x\in \mathbb{R}^n$ such that $\Omega=B_1(x)$, contradicting that $\Omega$ is unbounded.

It follows that $\Omega$ is bounded. As in the previous paragraph, we see that there  is $x\in \mathbb{R}^n$ such that $\Omega=B_1(x)$. 
 This completes the proof of the proposition.
\end{proof}
\section{Stability and weak stability}
The following lemma has essentially been observed in \cite[Theorem 3.5]{SternbergZumbrun}.
\begin{lem}
	Let $\eta\in C_c^\infty(\bar \Omega)$. There holds
	\begin{align} \label{error bound} 
		&\int_\Omega |D (|D u|\,\eta)|^2+\int_{\partial \Omega} (H-n)\,\,\eta^2
		\leq \int_{\Omega} |D \eta|^2.
	\end{align} 
\end{lem}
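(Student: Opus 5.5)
The plan is to test the Bochner identity against $\eta^2$ and integrate by parts with $w=|Du|$, in the spirit of \cite{SternbergZumbrun}. Since $w$ need not be smooth where $Du=0$, I would first work with the regularization $w_\varepsilon=\sqrt{|Du|^2+\varepsilon}$ for $\varepsilon>0$, which lies in $C^\infty(\bar\Omega)$, and let $\varepsilon\to0$ at the end.

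\emph{Integration by parts.} For $w_\varepsilon$ and $\eta\in C_c^\infty(\bar\Omega)$,
\begin{align*}
\int_\Omega |D(w_\varepsilon\eta)|^2=\int_\Omega w_\varepsilon^2|D\eta|^2+\int_\Omega \eta^2|Dw_\varepsilon|^2+\frac12\int_\Omega D(\eta^2)\cdot D(w_\varepsilon^2).
\end{align*}
Integrating the last term by parts gives
\begin{align*}
\int_\Omega |D(w_\varepsilon\eta)|^2=\int_\Omega w_\varepsilon^2|D\eta|^2-\int_\Omega \eta^2\bigl(\tfrac12\Delta w_\varepsilon^2-|Dw_\varepsilon|^2\bigr)+\frac12\int_{\partial\Omega}\eta^2\,\mu(\Omega)\cdot D w_\varepsilon^2 .
\end{align*}

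\emph{The interior term.} Since $w_\varepsilon^2=|Du|^2+\varepsilon$, the Bochner formula \eqref{Laplacian} gives $\frac12\Delta w_\varepsilon^2=|D^2u|^2$. Moreover
\begin{align*}
|Dw_\varepsilon|^2=\frac{|D^2u\,Du|^2}{|Du|^2+\varepsilon}\leq |D^2u|^2 ,
\end{align*}
which is the regularized Kato inequality. Hence the interior term is nonpositive and can be dropped.

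\emph{The boundary term.} On $\partial\Omega$, $\mu(\Omega)=-Du$, so
\begin{align*}
\frac12\,\mu(\Omega)\cdot D|Du|^2=-D^2u(Du,Du)=n-H
\end{align*}
by Lemma \ref{second derivative identity}. Therefore
\begin{align*}
\int_\Omega |D(w_\varepsilon\eta)|^2+\int_{\partial\Omega}(H-n)\,\eta^2\leq \int_\Omega (|Du|^2+\varepsilon)\,|D\eta|^2 .
\end{align*}

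\emph{Passing to the limit.} By Corollary \ref{gradient}, $|Du|^2<1$, so the right-hand side is at most $(1+\varepsilon)\int_\Omega|D\eta|^2$. As $\varepsilon\to0$, $w_\varepsilon\eta\to|Du|\,\eta$ in $L^2$, with $D(w_\varepsilon\eta)$ bounded in $L^2$ on the compact support of $\eta$. Lower semicontinuity of the Dirichlet energy (with $|Du|\in W^{1,2}_{loc}$ as a Lipschitz function) then yields \eqref{error bound}.

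The only delicate point is this regularization, namely justifying Kato's inequality near the critical points of $u$. The $w_\varepsilon$ device handles it; the rest is routine bookkeeping.
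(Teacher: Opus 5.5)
Your proof is correct and follows the paper's argument: you integrate the Bochner identity against $\eta^2$, convert the boundary term via $D^2u(Du,Du)=H-n$, drop the interior term by Kato's inequality, and bound $|Du|^2\le 1$ using Corollary \ref{gradient}. The only difference is your regularization $w_\varepsilon=\sqrt{|Du|^2+\varepsilon}$. The paper instead works directly with the Lipschitz function $|Du|$ and uses $|D^2u|\geq |D|Du||$ almost everywhere, which is equally valid.
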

\begin{proof}
	By  the divergence theorem, using \eqref{Laplacian} and Lemma \ref{second derivative identity},
	$$
	\frac12\,\int_\Omega D |Du|^2\cdot D \eta^2=-\int_{\Omega} |D^2u|^2\,\eta^2+\int_{\partial \Omega} (n-H)\,\eta^2.
	$$ 
	Thus,  
	\begin{align*} 
		&\int_\Omega |D(|D u|\,\eta)|^2+\int_{ \partial \Omega} (H-n)\,\eta^2
		\\&\qquad =-\int_{\Omega} |D^2 u|^2\,\eta^2+\int_{\Omega}|D|D u||^2\,\eta^2+\int_{ \Omega} |D u|^2\,|D\eta|^2.
	\end{align*} 
	Note  that $|D^2 u|\geq |D |D u||$ almost everywhere. Using also Corollary \ref{gradient}, we obtain \eqref{error bound}.	
\end{proof}
In the case where $n=2$ and $\partial \Omega$ has bounded curvature, the following lemma has been obtained in \cite[Lemma 26]{eichmair2026tangential}.
\begin{lem} \label{weakly stable, stable}
	Assume that $\Omega$ has infinite volume and is weakly stable. Then $\Omega$ is stable.
\end{lem}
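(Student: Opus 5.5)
Given $\phi\in C_c^\infty(\bar\Omega)$, the plan is to correct $\phi$ by a multiple of a cutoff so that the boundary integral vanishes, apply weak stability to the corrected function, and show that the correction terms vanish in the limit. Let $\{\lambda_k\}$ be as in Proposition \ref{asymptotics}, and let $\eta_k\in C_c^\infty(\mathbb{R}^n)$ satisfy $\eta_k=1$ on $B_{\lambda_k}(0)$, $\eta_k=0$ outside $B_{\lambda_k+1}(0)$, $0\leq\eta_k\leq 1$, and $|D\eta_k|=O(1)$. Since $\Omega$ has infinite volume, $V(\lambda_k)\to\infty$, and so $A(\lambda_k)\to\infty$ by \eqref{asymptotic property}. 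Set
$$
c_k=\frac{\int_{\partial\Omega}\phi}{\int_{\partial\Omega}\eta_k}
\qquad\text{and}\qquad
\phi_k=\phi-c_k\,\eta_k .
$$
The denominator is at least $A(\lambda_k)$, so $c_k=O(1)/A(\lambda_k)\to0$. Since $\int_{\partial\Omega}\phi_k=0$, weak stability \eqref{stable} with $f=n$ and $f'=0$ gives
$$
\int_{\partial\Omega}(n-H)\,\phi_k^2\leq\int_\Omega|D\phi_k|^2 .
$$

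We expand both sides. The cross terms involve only the fixed compact support of $\phi$, and they carry a factor $c_k\to0$, so they vanish in the limit. The remaining quadratic correction terms are
$$
c_k^2\int_{\partial\Omega}(n-H)\,\eta_k^2
\qquad\text{and}\qquad
c_k^2\int_\Omega|D\eta_k|^2 .
$$
For the gradient term, $\int_\Omega|D\eta_k|^2=O(1)\,(V(\lambda_k+1)-V(\lambda_k))=o(1)\,A(\lambda_k)$ by \eqref{asymptotics zero}. Hence $c_k^2\int_\Omega|D\eta_k|^2=o(1)/A(\lambda_k)\to0$.

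The boundary term is the main obstacle, because $H$ is not bounded and $\partial\Omega\cap\operatorname{supp}\eta_k$ may have area much larger than $A(\lambda_k)$. Lemma \ref{mean curvature trivial bound} gives $n-H\geq0$, so this term cannot simply be discarded: it sits on the left-hand side with a favourable sign. What is needed is therefore an upper bound for $\int_{\partial\Omega}(n-H)\eta_k^2$. For this we use \eqref{error bound}, which yields
$$
\int_{\partial\Omega}(n-H)\,\eta_k^2\geq-\int_\Omega|D\eta_k|^2 .
$$
This is again a lower bound, so it does not settle the matter. Instead we apply the divergence identity from the proof of \eqref{error bound} directly:
$$
\int_{\partial\Omega}(n-H)\,\eta_k^2=\int_\Omega|D^2u|^2\,\eta_k^2+\frac12\int_\Omega D|Du|^2\cdot D\eta_k^2 .
$$
This identity alone does not give an upper bound either. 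A cleaner route is to avoid the term altogether by only needing the left-hand side to dominate. Write $\phi=\phi_k+c_k\eta_k$. Since $n-H\geq0$,
$$
\int_{\partial\Omega}(n-H)\,\phi^2\leq(1+\delta)\int_{\partial\Omega}(n-H)\,\phi_k^2+(1+\delta^{-1})\,c_k^2\int_{\partial\Omega}(n-H)\,\eta_k^2 .
$$
So an upper bound on the boundary term is still required; there is no way around it.

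To obtain that upper bound we exploit $|Du|\leq1$ and Young's inequality in the identity above, though $\int_\Omega|D^2u|^2\eta_k^2$ is then uncontrolled. Another option is to choose the correction supported near $\partial\Omega$ with $\eta_k$ replaced by $\eta_k|Du|$, which equals $\eta_k$ on $\partial\Omega$ and hence leaves $c_k$ unchanged. With this choice, \eqref{error bound} controls exactly the combination that appears in the stability inequality:
$$
\int_\Omega\bigl|D(|Du|\,\eta_k)\bigr|^2-\int_{\partial\Omega}(n-H)\,\eta_k^2\leq\int_\Omega|D\eta_k|^2=o(1)\,A(\lambda_k).
$$
Multiplying by $c_k^2$, the quadratic correction in (right-hand side minus left-hand side) is at most $o(1)/A(\lambda_k)\to0$. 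The remaining cross terms are again supported in $\operatorname{supp}\phi$, have a fixed bound, and carry a factor $c_k\to0$. Letting $k\to\infty$ therefore yields \eqref{stable} for $\phi$, which is exactly what is to be proved.
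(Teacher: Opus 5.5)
Your final argument---replacing the correction by $c_k\,|Du|\,\eta_k$, which agrees with $c_k\,\eta_k$ on $\partial\Omega$, and using \eqref{error bound} to bound the quadratic correction by $c_k^2\int_\Omega|D\eta_k|^2=o(1)/A(\lambda_k)$---is correct and is essentially the paper's proof. The only differences are that the paper takes $\eta_k$ to vanish near $\operatorname{spt}\phi$, so the cross terms vanish identically (whereas you keep $\eta_k=1$ there and kill them with the factor $c_k\to0$), and that your earlier attempts with the plain cutoff $\eta_k$, including the claim that a standalone upper bound on $\int_{\partial\Omega}(n-H)\,\eta_k^2$ is unavoidable, are superseded and should be removed.
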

\begin{proof}
	Let $\phi \in C_c^\infty(\bar \Omega)$ be such that
	$$
	\int_{\partial \Omega}\phi =1.
	$$
	Let $\lambda>2$ be such that $\operatorname{spt}(\phi)\subset B_{\lambda-2}(0)$. Let $\{\lambda_k\}_{k=1}^\infty$ be as in Proposition \ref{asymptotics}. We may assume that $\lambda_k>\lambda$ for all $k$.  	 We choose $\eta_k\in C_c^\infty(\mathbb{R}^n),$ $k\geq 1$,  such that 
	\begin{itemize}
		\item[$\circ$]  $\eta_k(x)=1$ for all $x\in B_{\lambda_k}(0)\setminus B_{\lambda}(0)$,
		\item[$\circ$] $ \eta_k(x)=0$ for all $x\in B_{\lambda-1}(0)$ and  $x\in \mathbb{R}^n \setminus B_{\lambda_k+1}(0)$, 
		\item[$\circ$] $0\leq \eta_k\leq 1$, and
		\item[$\circ$] $|D \eta_k|=O(1)$.
	\end{itemize}
	By \eqref{asymptotic property}, using that $V(\lambda_k)\to \infty$, there holds $A(\lambda_k)\to \infty$ and 
	$$
	\int_{\partial \Omega} \eta_k\to\infty.
	$$	
In conjunction with \eqref{asymptotics zero},   \begin{align} \label{good cut off}
	\int_{\Omega} |D\eta_k|^2=o(1)\,\int_{\partial \Omega} \eta_k.
	\end{align} 
Note that $\operatorname{spt} \eta_k\cap \operatorname{spt}\phi=\emptyset$.	Let $\varepsilon_k>0$, $k\geq 1$, be such that 
	$$
	\int_{\partial \Omega} \tilde \phi _k=0
	$$
	where $\tilde \phi _k=\phi-\varepsilon_k\,|Du|\,\eta_k$. Note that
	$$
	\varepsilon_k\,\int_{\partial \Omega} \eta_k=O(1).
	$$
	In particular, $\varepsilon_k=o(1)$.
	 By \eqref{good cut off},
	$$
	\varepsilon^2_k\,\int_{\Omega} |D\eta_k|^2=o(1).
	$$
	By the weak stability of $\Omega$,
	$$
	\int_{\partial \Omega} (n-H)\,\tilde \phi_k^2\leq \int_\Omega |D\tilde \phi_k|^2.
	$$
	In conjunction with \eqref{error bound}, we see that 
	$$
\int_{\partial \Omega}(n-H)\,\phi ^2\leq 	\int_{\Omega} |D\phi |^2+\varepsilon^2_k\,\int_{\Omega} |D\eta_k|^2 =\int_{\Omega} |D\phi|^2+o(1).
	$$
This completes the proof of the lemma.
\end{proof}

\section{Proof of Theorem \ref{main theorem}}

\begin{lem}
	Suppose that $\Omega$ is stable. Let $\eta\in C_c^\infty(\bar \Omega)$. There holds 
	\begin{align} \label{first stability} 
		\int_{\partial \Omega} \eta^2\,(n-H)\leq 	9\,\int_\Omega |D\eta|^2+2\,\int_{\partial \Omega} \eta^2\,(n-1-H).
	\end{align} 
\end{lem}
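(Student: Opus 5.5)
The plan is to test the stability inequality \eqref{stable} with the function $\phi=\eta\,\varphi$, where $\varphi$ is as in \eqref{varphidef}, and then multiply by $4$. Since $f=n$, we have $f(0)=n$ and $f'=0$, so stability reads
$$\int_{\partial\Omega}(n-H)\,\phi^2\leq \int_\Omega |D\phi|^2$$
for all $\phi\in C^\infty_c(\bar\Omega)$. The choice $\phi=\eta\,\varphi$ is admissible because $\varphi\in C^\infty(\bar\Omega)$ and $\eta$ has compact support. By Lemma \ref{phi}, $\varphi=1/2$ on $\partial\Omega$. Hence the left-hand side equals $\tfrac14\int_{\partial\Omega}(n-H)\,\eta^2$.

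For the right-hand side, I expand $|D(\eta\varphi)|^2=\varphi^2|D\eta|^2+\eta^2|D\varphi|^2+2\,\eta\,\varphi\,D\eta\cdot D\varphi$. I then rewrite the last two terms as $D\varphi\cdot D(\eta^2\varphi)$. Integrating by parts gives the standard identity
\begin{align*}
\int_\Omega |D(\eta\varphi)|^2=\int_\Omega \varphi^2\,|D\eta|^2-\int_\Omega \eta^2\,\varphi\,\Delta\varphi+\int_{\partial\Omega}\eta^2\,\varphi\,\mu(\Omega)\cdot D\varphi .
\end{align*}
There are no boundary terms at infinity, since $\eta$ has compact support in $\bar\Omega$.

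Each term is now estimated using earlier results.
\begin{itemize}
\item[$\circ$] The interior term $-\int_\Omega\eta^2\varphi\,\Delta\varphi$ is nonpositive. Indeed, $\Delta\varphi\geq0$ by Lemma \ref{phi}, and $\varphi\geq u>0$ in $\Omega$ by \eqref{Serrin}.
\item[$\circ$] In the first term, $\varphi^2<9/4$ by Lemma \ref{phi bound}, so it is at most $\tfrac94\int_\Omega|D\eta|^2$.
\item[$\circ$] On the boundary, $\varphi=1/2$ and $\mu(\Omega)\cdot D\varphi=n-1-H$ by Lemma \ref{phi}. So the boundary term equals $\tfrac12\int_{\partial\Omega}\eta^2\,(n-1-H)$.
\end{itemize}
Combining these bounds yields
$$\tfrac14\int_{\partial\Omega}(n-H)\,\eta^2\leq \tfrac94\int_\Omega|D\eta|^2+\tfrac12\int_{\partial\Omega}\eta^2\,(n-1-H).$$
Multiplying by $4$ gives \eqref{first stability}.

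The only delicate point is the sign of the interior term. It needs both the subharmonicity of $\varphi$ and the positivity of $\varphi$, and both are already available from Lemma \ref{phi} and \eqref{Serrin}. Everything else is direct bookkeeping with the boundary values from Lemma \ref{phi}.
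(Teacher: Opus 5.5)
Your proposal is correct and follows the paper's proof essentially line for line. You test stability with $\eta\,\varphi$ and use $\varphi=1/2$ on $\partial\Omega$. You then integrate the cross and gradient terms by parts, discard $-\int_\Omega \eta^2\,\varphi\,\Delta\varphi$ by subharmonicity and positivity of $\varphi$, evaluate the boundary term via Lemma \ref{phi}, and bound $\varphi^2<9/4$ by Lemma \ref{phi bound}. Your explicit remark that $\varphi\geq u>0$ supplies the sign the paper uses implicitly.
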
 
\begin{proof}
	Recall the definition \eqref{varphidef} of $\varphi \in C^\infty(\bar \Omega)$. Since $\Omega$ is stable and $\varphi=1/2$ on $\partial \Omega$, 
 $$ \frac14\,\int_{\partial \Omega}(n-H)\,\eta^2\leq \int_{\Omega}|D(\eta\,\varphi)|^2.$$
 Note that
 $$
|D(\eta\,\varphi)|^2=\varphi^2\,|D\eta|^2+2\,\eta\,\varphi\,D\eta\cdot D\varphi+\eta^2\,|D\varphi|^2. 
 $$
 Integrating by parts and using Lemma \ref{phi}, we have 
 \begin{align*} 
2\,\int_\Omega \eta\,\varphi\,D\eta\cdot D\varphi+ \int_\Omega\eta^2\,|D\varphi|^2&=-\int_\Omega \eta^2\,\varphi\Delta\varphi+\int_{\partial \Omega}\eta^2\,\varphi\,\mu(\Omega)\cdot D\varphi 
\\&\leq  \frac12\,\int_{\partial \Omega}\eta^2 \,\mu(\Omega)\cdot D\varphi \\
&= \frac12\,\int_{\partial \Omega}\eta^2\,(n-1-H).
 \end{align*} 
Thus,
 $$
\int_{\partial \Omega} \eta^2\,(n-H)\leq  4\,\int_\Omega \varphi^2\,|D\eta|^2+2\,\int_{\partial \Omega} \eta^2\,(n-1-H).
 $$
The assertion follows from   Lemma \ref{phi bound}.
\end{proof} 

\begin{proof}[Proof of Theorem \ref{main theorem}] 
	Suppose, for a contradiction, that $\Omega$ has infinite volume. By Lemma \ref{weakly stable, stable}, $\Omega$ is stable.  	
Let $\{\lambda_k\}_{k=1}^\infty$  be as in Proposition \ref{asymptotics}. We choose $\eta_k\in C_c^\infty(\mathbb{R}^n),$ $k\geq 1$,  such that 
\begin{itemize}
	\item[$\circ$] $\eta_k(x)=1$ for all $x\in B_{\lambda_k}(0)$,
	\item[$\circ$] $\eta_k(x)=0$ for all $x\in \mathbb{R}^n\setminus B_{\lambda_k+1}(0)$, and 
	\item[$\circ$] $|D\eta_k|\leq 2$.
\end{itemize}  
By     the stability of $\Omega$ and \eqref{asymptotics zero},
$$
 \int_{\partial \Omega} \eta_k^2\,(n-H)\leq \int_\Omega |D\eta_k|^2=o(1)\,A(\lambda_k).
$$
It follows that
$$
\int_{\partial \Omega} \eta_k^2\,(n-1-H)\leq o(1)\,A(\lambda_k)-\int_{\partial \Omega} \eta_k^2\leq -(1+o(1))\,A(\lambda_k).
$$
In conjunction with \eqref{first stability}, using \eqref{asymptotics zero} again, 
\begin{align} \label{contradict inequality} 
\int_{\partial \Omega} \eta_k^2\,(n-H)\leq -(2+o(1))\,A(\lambda_k)+36\,(V(\lambda_k+1)-V(\lambda_k))=-(2+o(1))\,A(\lambda_k).
\end{align} 
By Lemma \ref{mean curvature trivial bound}, the left side is nonnegative. 
Since 
$A(\lambda_k)\to\infty 
$ by \eqref{asymptotic property}, this leads to a contradiction.

It follows that $\Omega$ has finite volume. Applying Proposition \ref{infinite volume} to each component of $\Omega$, we see that $\Omega$ is the union of finitely many  balls of radius 1. If $\Omega$ has at least two components, say $\Omega_1$ and $\Omega_2$, then the function $f=\chi_{\Omega_1}-\chi_{\Omega_2}$ satisfies 
$$
\int_{\partial \Omega}f =0,\qquad\int_\Omega |Df|^2=0,\qquad \text{and}\qquad 
\int_{\partial \Omega} (n-H)f^2=2\,n\,|B_1(0)|.
$$
This contradicts the weak stability of $\Omega$. Thus, $\Omega$ is connected. This completes the proof of the theorem.	
\end{proof}
\begin{appendices}
		\section{First and second variation}
		\label{appendix a}
Let $F\in C^\infty(\mathbb{R})$ and $U\subset \mathbb{R}^n$ be nonempty, open, and bounded. Given $u\in C^\infty(\mathbb{R}^n)$, let  
$$
J_U(u)=\frac12\,\int_{U\cap \{y\in \mathbb{R}^n:u(y)>0\}} |Du|^2-\int_{U\cap \{y\in \mathbb{R}^n:u(y)>0\}}F(u).
$$
Let $\Omega=\{y\in \mathbb{R}^n:u(y)>0\}$. We assume that $\Omega$ is nonempty and that  $Du(y)\neq 0$ 	for all $y\in \partial \Omega$. By the regular value theorem, $\Omega$ is a domain with outward normal 
$$
\mu(\Omega)=-\frac{Du}{|Du|}.
$$
  
  Given $\varepsilon>0$, let 
  \begin{align} \label{ut} \{u_t:t\in(-\varepsilon,\varepsilon)\}\end{align}  be a smooth family of functions $u_t\in C^\infty(\mathbb{R}^n)$ such that $u_0=u$. Let $\Omega_t=\{y\in \mathbb{R}^n:u_t(y)>0\}$. We assume that there is $K\Subset U$ such that  $$\{y\in \mathbb{R}^n:u(y)\neq u_t(y)\}\subset K$$ for all $t\in(-\varepsilon,\varepsilon)$.   Shrinking $\varepsilon>0$, if necessary, we may assume that $Du_t(y)\neq 0$ 	for all $t\in(-\varepsilon,\varepsilon)$ and  $y\in \partial \Omega_t$. By the regular value theorem, $\Omega_t$ is a domain. Let   
   $\phi\in C^\infty_c(\mathbb{R}^n)$ be given by  
$$
\phi=\frac{d}{dt}\bigg|_{t=0}u_t.
$$ 
 Let $\gamma_t$ be the  normal speed of the variation $\{\partial\Omega_t:t\in(-\varepsilon,\varepsilon)\}$ of hypersurfaces with respect to the normal pointing out of $\Omega_t$. Note that \begin{align} \label{gamma0} \gamma_0=\frac\phi{|Du|}.
\end{align}
 We assume that 
$$
\frac{d}{dt}\bigg|_{t=0}|U\cap \Omega_t|=\frac{d^2}{dt^2}\bigg|_{t=0}|U\cap\Omega_t|=0.
$$
It follows that
\begin{align} \label{volume constraints} 
\int_{\partial\Omega}\frac\phi{|Du|}=0\qquad\text{and}\qquad \frac{d}{dt}\bigg|_{t=0}\int_{\partial\Omega_t} \gamma_t=0.
\end{align} 
\begin{lem} \label{first variation} 
	There holds 
	\begin{align} \label{first variation formula} 
	\frac{d}{dt}\bigg|_{t=0}J_U(u_t)=-\int_{\Omega}(\Delta u+F'(u))\,\phi-\frac12\,\int_{\partial\Omega}|Du|\,\phi.
	\end{align} 
	
\end{lem}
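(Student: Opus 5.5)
We compute the first variation of $J_U(u_t)$ by splitting it into an interior part and a boundary part. Since $u_t=u$ outside $K\Subset U$, all integrals may be taken over a fixed bounded region, and $U$ plays no role beyond this.

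\textbf{Step 1: Reynolds transport.} For a smooth family of integrands $g_t$ on $\Omega_t$, the transport theorem gives
\[
\frac{d}{dt}\Big|_{t=0}\int_{U\cap\Omega_t} g_t=\int_{\Omega}\dot g+\int_{\partial\Omega} g_0\,\gamma_0 .
\]
Here the boundary motion is only nonzero inside $K$, and $\gamma_0=\phi/|Du|$ by \eqref{gamma0}. We apply this with
\[
g_t=\tfrac12|Du_t|^2-F(u_t),
\]
whose derivative at $t=0$ is $\dot g=Du\cdot D\phi-F'(u)\,\phi$.

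\textbf{Step 2: Interior term.} We integrate $\int_\Omega Du\cdot D\phi$ by parts, which is legitimate since $\phi$ has compact support:
\[
\int_\Omega Du\cdot D\phi=-\int_\Omega \Delta u\,\phi+\int_{\partial\Omega}\phi\,\mu(\Omega)\cdot Du .
\]
On $\partial\Omega$ we have $\mu(\Omega)=-Du/|Du|$, so $\mu(\Omega)\cdot Du=-|Du|$ and the boundary integral equals $-\int_{\partial\Omega}|Du|\,\phi$.

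\textbf{Step 3: Boundary term.} On $\partial\Omega$ we have $u=0$, so $g_0=\tfrac12|Du|^2-F(0)$. The boundary contribution from the transport theorem is therefore
\[
\int_{\partial\Omega}\Big(\tfrac12|Du|^2-F(0)\Big)\frac{\phi}{|Du|}=\frac12\int_{\partial\Omega}|Du|\,\phi-F(0)\int_{\partial\Omega}\frac{\phi}{|Du|}.
\]
The last integral vanishes by the first volume constraint in \eqref{volume constraints}.

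\textbf{Step 4: Summing.} Adding the contributions gives
\[
-\int_\Omega(\Delta u+F'(u))\,\phi-\int_{\partial\Omega}|Du|\,\phi+\frac12\int_{\partial\Omega}|Du|\,\phi,
\]
which is exactly \eqref{first variation formula}.

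The main obstacle is justifying Step 1 for level-set domains. Since $Du_t\neq0$ on $\partial\Omega_t$, the implicit function theorem yields a smooth family of diffeomorphisms $\Psi_t$, equal to the identity outside a neighborhood of $K$, with $\Psi_t(\Omega)=\Omega_t$. Transport along $\Psi_t$ then gives the formula. The normal velocity follows from differentiating $u_t(\Psi_t(y))=0$: this gives $\phi+Du\cdot\partial_t\Psi_t=0$, hence $\partial_t\Psi_t\cdot\mu(\Omega)=\phi/|Du|$, consistent with \eqref{gamma0}.
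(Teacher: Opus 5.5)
Your proposal is correct and follows essentially the same route as the paper: the transport formula with boundary contribution $\int_{\partial\Omega}\bigl(\frac12\,|Du|^2-F(0)\bigr)\,\phi/|Du|$, the first constraint in \eqref{volume constraints} to discard the $F(0)$ term, and the divergence theorem applied to $\int_\Omega Du\cdot D\phi$. Your justification of the transport formula via the diffeomorphisms $\Psi_t$ fills in a step the paper leaves implicit.
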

\begin{proof}
Using \eqref{gamma0} and \eqref{volume constraints}, we have
	$$
	\frac{d}{dt}\bigg|_{t=0} J_U(u_t)=\int_{\Omega}(Du\cdot D\phi-F'(u)\,\phi)+\frac12\,\int_{\partial\Omega}|Du|\,\phi.
	$$
	The assertion now follows from the divergence theorem.
\end{proof}
Assume that, for every choice of $U$, $u$ is critical for $J_U$ among all variations \eqref{ut}. In particular, the right side of \eqref{first variation formula} vanishes for all $\phi \in C^\infty_c(\mathbb{R}^n)$ with 
$$
\int_{\partial \Omega} \frac{\phi}{|Du|}=0.
$$  It follows that there is $\rho> 0$ such that 
\begin{align} \label{PDE} 
	\begin{dcases}
	\quad	-\Delta u=F'(u)\qquad&\text{in $\Omega$ and}\\
		\quad|Du|=\rho&\text{on $\partial\Omega$}.
	\end{dcases}
\end{align} 
\begin{lem}
	Assume that $u\in C^\infty(\mathbb{R}^n)$ satisfies \eqref{PDE}. 	There holds 
	$$
	\frac{d^2}{dt^2}\bigg|_{t=0}J_U(u_t)=\int_{\Omega}(|D\phi|^2-F''(u)\,\phi^2)+\int_{\partial\Omega}\left(H-\frac{F'(0)}\rho\right)\,\phi^2.
	$$
\end{lem}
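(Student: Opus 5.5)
We compute the second variation by splitting $J_U(u_t)$ into its bulk and boundary-moving parts. Write
$$
J_U(u_t)=\int_{U\cap\Omega_t}\Big(\tfrac12|Du_t|^2-F(u_t)\Big).
$$
By the first-variation computation in Lemma \ref{first variation}, for every $t$ we have
$$
\frac{d}{dt}J_U(u_t)=\int_{\Omega_t}\big(Du_t\cdot D\dot u_t-F'(u_t)\,\dot u_t\big)+\int_{\partial\Omega_t}\Big(\tfrac12|Du_t|^2-F(0)\Big)\gamma_t .
$$
Here we used that the integrand restricted to $\partial\Omega_t$ equals $\tfrac12|Du_t|^2-F(0)$, since $u_t=0$ there. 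We differentiate this expression once more at $t=0$ and treat the bulk and boundary terms separately.

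For the bulk term, differentiating in $t$ gives three contributions.
\begin{itemize}
\item[$\circ$] The $\phi$-terms give $\int_\Omega(|D\phi|^2-F''(u)\phi^2)$.
\item[$\circ$] The $\ddot u$-terms give $\int_\Omega(Du\cdot D\ddot u-F'(u)\ddot u)$. Integrating by parts and using \eqref{PDE}, this equals $-\int_{\partial\Omega}|Du|\,\ddot u=-\rho\int_{\partial\Omega}\ddot u$.
\item[$\circ$] The moving-domain term gives $\int_{\partial\Omega}(Du\cdot D\phi-F'(0)\phi)\gamma_0$.
\end{itemize}
For the boundary term, we differentiate $\int_{\partial\Omega_t}g_t\gamma_t$ with $g_t=\tfrac12|Du_t|^2-F(0)$. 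This produces
$$
\int_{\partial\Omega}\Big(\frac{d}{dt}\Big|_{t=0}\big(g_t\circ\text{flow}\big)\Big)\gamma_0+g_0\,\frac{d}{dt}\Big|_{t=0}\int_{\partial\Omega_t}\gamma_t .
$$
Since $g_0=\tfrac12\rho^2-F(0)$ is constant on $\partial\Omega$, the second piece vanishes by \eqref{volume constraints}. This is where the second volume constraint is used. The material derivative of $g_t$ along the normal flow is $Du\cdot D\phi+\gamma_0\,\mu\cdot D(\tfrac12|Du|^2)$.

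The next step is to express everything in terms of $\phi$ on $\partial\Omega$. On the boundary we have $\mu=-Du/\rho$ and $\gamma_0=\phi/\rho$. From $\mu\cdot D\tfrac12|Du|^2=-D^2u(Du,Du)/\rho$ and the identity in Lemma \ref{second derivative identity} (applied with $\Delta u=-F'(0)$ on $\partial\Omega$), we get $D^2u(Du,Du)=\rho^2H-F'(0)$.

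The main obstacle is the term $-\rho\int_{\partial\Omega}\ddot u$. It is handled by differentiating the identity $u_t(y_t)=0$ twice along the normal flow $y_t$ of $\partial\Omega_t$. This expresses $\ddot u$ on $\partial\Omega$ through three kinds of terms:
\begin{itemize}
\item[$\circ$] $\gamma_0\,\mu\cdot D\phi$;
\item[$\circ$] $\gamma_0^2D^2u(\mu,\mu)$;
\item[$\circ$] $\partial_t\gamma_t$.
\end{itemize}
The $\partial_t\gamma_t$-contribution is then converted, via $\frac{d}{dt}\int_{\partial\Omega_t}\gamma_t=\int(\partial_t\gamma+H\gamma^2)=0$, into $-\int H\gamma_0^2$. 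This conversion is exactly where the mean curvature enters.

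Finally, we collect all boundary terms. The $Du\cdot D\phi=-\rho\,\mu\cdot D\phi$ terms should cancel among the three sources, and the $D^2u(\mu,\mu)$ terms should partially cancel. This should leave $\int_{\partial\Omega}\big(\rho^2H-F'(0)\big)\gamma_0^2\rho^{-1}\cdot\rho$. Rewritten with $\gamma_0=\phi/\rho$, this is $\int_{\partial\Omega}(H-F'(0)/\rho)\phi^2$, as asserted. The bookkeeping of signs in this cancellation is the step I expect to require the most care.
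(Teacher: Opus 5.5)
Your route is sound, and it differs from the paper's. The paper differentiates the first variation only after integrating by parts, i.e.\ in the form $-\int_{\Omega_t}(\Delta u_t+F'(u_t))\,\dot u_t-\frac12\int_{\partial\Omega_t}|Du_t|^2\gamma_t$, plus an $F(0)$-term that the second volume constraint removes. Because $\Delta u+F'(u)=0$, both the $\ddot u$-term and the moving-domain term drop out at $t=0$. So the paper never needs the second-order expansion of $u_t(\Phi_t)=0$ or the normal acceleration. The boundary term is handled exactly as you handle yours: $|Du|^2=\rho^2$ is constant, and the second volume constraint applies. This leaves $\frac1{\rho^3}\int_{\partial\Omega}\phi^2D^2u(Du,Du)-\frac1\rho\int_{\partial\Omega}\phi\,Du\cdot D\phi$. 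In the paper, $H$ then enters through the boundary decomposition $\Delta u=\Delta^{\partial\Omega}u+D^2u(\mu,\mu)+H\,\mu\cdot Du$. In your version, $H$ enters through the area variation $\partial_t\,d\sigma=H\gamma\,d\sigma$ inside the second volume constraint, and the boundary Laplacian identity is not needed at all. The paper's route is shorter; yours makes the geometric origin of $H$ transparent, as in the classical CMC second variation.

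Your final tally, however, is wrong as written. On $\partial\Omega$ one has $\ddot u=-2\gamma_0\,\mu\cdot D\phi-\gamma_0^2D^2u(\mu,\mu)+\rho\,\partial_t\gamma$, where $\partial_t\gamma$ is the derivative of $\gamma_t$ along the normal flow. The three sources then contribute as follows:
\begin{itemize}
\item[$\circ$] $-\rho\int_{\partial\Omega}\ddot u$ gives $\int_{\partial\Omega}(2\rho\gamma_0\,\mu\cdot D\phi+\rho\gamma_0^2D^2u(\mu,\mu)+\rho^2H\gamma_0^2)$;
\item[$\circ$] the moving-domain term gives $\int_{\partial\Omega}(-\rho\gamma_0\,\mu\cdot D\phi-\rho F'(0)\gamma_0^2)$;
\item[$\circ$] the boundary term gives $\int_{\partial\Omega}(-\rho\gamma_0\,\mu\cdot D\phi-\rho\gamma_0^2D^2u(\mu,\mu))$.
\end{itemize}
The $D^2u(\mu,\mu)$-terms cancel completely, not partially. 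The remainder is $\rho\,(\rho H-F'(0))\gamma_0^2=(H-F'(0)/\rho)\,\phi^2$, where the $F'(0)$-part comes from the moving-domain term, not from a Hessian identity. Your leftover $(\rho^2H-F'(0))\gamma_0^2$ actually equals $(H-F'(0)/\rho^2)\phi^2$, so your ``rewriting'' step hides a compensating slip. Relatedly, the identity you quote should read $D^2u(\mu,\mu)=\rho H-F'(0)$, i.e.\ $D^2u(Du,Du)=\rho^3H-\rho^2F'(0)$. Lemma \ref{second derivative identity} is only the case $\rho=1$, $F'=n$. Since your route never uses this identity, simply drop it. All these discrepancies vanish when $\rho=1$, which is why they are easy to miss.
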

\begin{proof}
	Given $t\in(-\varepsilon,\varepsilon)$, let 
	$
	\Phi_t: \partial\Omega\to \partial\Omega_t$ 
be the normal parametrization of $\partial \Omega_t$. By \eqref{gamma0}, 
	$$
	\frac{d}{dt}\bigg|_{t=0}\Phi_t=-\frac{\phi}{|Du|}\,\frac{Du}{|Du|}=-\frac{\phi}{\rho^2}\,Du.
	$$
	Note that 
	$$
	\frac{d}{dt}\bigg|_{t=0} |(Du_t)(\Phi_t)|^2=-\frac2{\rho^2}\,D^2u(Du,Du)\,\phi+2\,Du\cdot D\phi.
	$$
	In conjunction with Lemma \ref{first variation}, we obtain
	\begin{align*} 
		\frac{d^2}{dt^2}\bigg|_{t=0}J_U\left (u_t\right)&=-\int_{\Omega}(\Delta \phi+F''(u)\,\phi)\,\phi-\frac1\rho\,\int_{\partial\Omega}\phi\,Du\cdot D\phi
		 +\frac1{\rho^{3}}\,\int_{\partial\Omega}\phi^2\,D^2u(Du,Du).
	\end{align*} 
By the divergence theorem,
	\begin{align*}
		\int_{\Omega}\phi\,\Delta \phi=-	\int_{\Omega}|D\phi|^2-\frac1{\rho}\,\,\int_{\partial\Omega}D\phi\cdot Du\,\phi.
	\end{align*}
	Moreover, on  $\partial\Omega$, using \eqref{PDE},
	$$
	-F'(0)=\Delta u=\Delta^{\partial \Omega}u+D^2u(\mu(\Omega),\mu(\Omega))+H\,\mu(\Omega)\cdot Du=\frac1{\rho^2}\,D^2u(Du,Du)-H\,\rho.
	$$ 
	The assertion follows from these identities.
\end{proof}

\end{appendices}


\end{document}